\newtheorem*{rep@theorem}{\rep@title}
\newcommand{\newreptheorem}[2]{%
\newenvironment{rep#1}[1]{%
 \def\rep@title{#2 \ref{##1}}%
 \begin{rep@theorem}}%
 {\end{rep@theorem}}}
\newtheorem{thm}{Theorem}
\newtheorem{prop}[thm]{Proposition} 
\newtheorem{cor}[thm]{Corollary}
\newtheorem{con}[thm]{Conjecture}
\theoremstyle{remark}
\newtheorem{rmk}[thm]{Remark}
\newtheorem*{example*}{Running Example}
\newtheorem*{example2*}{Example}
\theoremstyle{definition}
\def\AA{\mathcal{A}}
\def\BB{\mathcal{B}}
\def\CC{\mathcal{C}}
\def\HH{\mathcal{H}}
\def\II{\mathcal{I}}
\def\LL{\mathcal{L}}
\def\MM{\mathcal{M}}
\def\F{\mathbb{F}}
\def\R{\mathbb{R}}
\def\Z{\mathbb{Z}}
\def\a{\mathbf{a}}
\def\u{\mathbf{u}}
\def\v{\mathbf{v}}
\def\w{\mathbf{w}}
\DeclareMathOperator{\aff}{aff}
\DeclareMathOperator{\conv}{conv}
\DeclareMathOperator{\im}{im}
\DeclareMathOperator{\posHull}{posHull}
\DeclareMathOperator{\rank}{rank}
\DeclareMathOperator{\Relint}{rel ~int}
\DeclareMathOperator{\sign}{sign}
\DeclareMathOperator{\Spec}{Spec}
\DeclareMathOperator{\vol}{vol}
\DeclareMathOperator{\diag}{diag}
\newcommand{\rL}{{\textsf{\upshape L}}}
\newcommand{\rM}{{\textsf{\upshape M}}}
\newcommand{\rN}{{\textsf{\upshape N}}}
\newcommand{\rZ}{{\textsf{\upshape Z}}}
\def\supp{\mathrm{supp}\:}
\newcommand{\cZono}[1]{\rZ_{0}(#1)}
\newcommand{\defn}[1]{\textbf{#1}}
\newcommand{\isomorphic}{\cong}
\newcommand{\vecone}{\boldsymbol{1}}
\newcommand{\veczero}{\boldsymbol{0}}
\def\@BracContents{} % default
\newcommand{\@BracKern}{\kern-\nulldelimiterspace}
\newcommand{\suchthat}{%
   \nonscript\;
   \ifnum\currentgrouptype=16
     \middle|
   \else
     \@suchthat
   \fi
   \nonscript\;} 
\newcommand{\@suchthat}{%
   { % open a group
   \let\suchthat\@empty % neutralize \suchthat inside \@Braccontents  
   \left.\@BracKern % fake \left
   \vphantom{\@BracContents} % set size   
   \middle| % bar
   \right.\@BracKern % fake \right
   } % end group
 }
\title{A Polyhedral Proof of the Matrix Tree Theorem}
\author{Aaron Dall}
\author{Julian Pfeifle}
\thanks{Both authors were partially supported by the project MINECO MTM2012-30951/FEDER. The first author received additional support from the MCINN grant BES-2010-030080. The second author received additional support from grants EUI-EURC-2011-4306, MTM~2011-24097 and 2009-SGR-1040.}
\address{Departament de Matem\`atica Aplicada II, 
Universitat Polit\`ecnica de Catalunya, 
Jordi Girona 1-3,
E-08034 Barcelona}
\email{[aaron.dall,julian.pfeifle]@upc.edu}	%for multiple from same institution
\keywords{Regular matroid, unimodular matrix, cocircuit lattice, zonotope}		
\subjclass[2010]{Primary 52B40; Secondary 52C40.}	
\begin{document}
\maketitle

%%%%%%%%%%%%%%%%%%%%%
%%%%%%%%%%%%%%%%%%%%%
\begin{abstract}
The classical matrix tree theorem relates the number of spanning trees of a connected graph with the product of the nonzero eigenvalues of its Laplacian matrix.	
The class of regular matroids generalizes that of graphical matroids, and a generalization of the matrix tree theorem holds for this wider class.

We give a new, geometric proof of this fact by showing via a dissect-and-rearrange argument that two combinatorially distinct zonotopes associated to a regular matroid have the same volume.
Along the way we prove that for a regular oriented matroid represented by a unimodular matrix, the lattice spanned by its cocircuits coincides with the lattice spanned by the rows of the representation matrix. 

Finally, by extending our setup to the weighted case we give new proofs of recent results of~An et~al.\ on weighted graphs, and extend them to cover regular matroids. 

No use is made of the Cauchy-Binet Theorem nor divisor theory on graphs.
\end{abstract}
%%%%%%%%%%%%%%%%%%%%%
%%%%%%%%%%%%%%%%%%%%%

%%%%%%%%%%%%%%%%%%%%%
%%%%%%%%%%%%%%%%%%%%%
\section*{Introduction}
\label{section:intro}

The matrix tree theorem is a classical result in algebraic graph theory that relates the number of spanning trees of a connected graph $G$ with the product of the nonzero eigenvalues of the Laplacian matrix of $G$.
	\begin{thm}[Kirchoff \cite{kirchhoff1847ueber}]
	\label{thm:MatrixTree}
		Let $G$ be a connected graph on $n$ vertices with $s$ spanning trees and whose Laplacian $\rL$ has nonzero eigenvalues $\lambda_{1}, \dots, \lambda_{n-1}$. Then $$\prod_{i\in [n-1]} \lambda_{i} = ns.$$
	\end{thm}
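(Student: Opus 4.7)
The plan is to prove Theorem~\ref{thm:MatrixTree} geometrically, by exhibiting two combinatorially distinct zonotopes associated to $G$ whose lattice-normalized $(n-1)$-dimensional volumes compute $s$ and $\prod \lambda_{i}$ respectively, and then showing that the second zonotope dissects into $n$ translates of the first. This route avoids any appeal to the Cauchy-Binet formula.

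Let $B \in \Z^{\sV \times \sE}$ be a signed vertex-edge incidence matrix of $G$, so that $\rL = B B^{T}$. Then $B$ is totally unimodular, its columns all lie in the hyperplane $H = \vecone^{\perp} \subset \R^{\sV}$, and the kernel of $\rL$ is spanned by $\vecone$, whose squared norm equals $n$. Writing $\Lambda = H \cap \Z^{\sV}$ for the natural integer lattice in $H$, I would first recognize $s$ as the $\Lambda$-normalized volume of the edge zonotope $\rZ(B) = \sum_{e \in \sE}[\veczero, b_{e}] \subset H$: by Shephard's decomposition, $\rZ(B)$ tiles into parallelepipeds indexed by the spanning trees of $G$, and total unimodularity guarantees that each such cell is a unit cell of $\Lambda$. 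Thus $\vol_{\Lambda}(\rZ(B)) = s$.

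Second, I would interpret $\prod \lambda_{i}$ as the $\Lambda$-normalized volume of a combinatorially distinct zonotope $\rZ^{*} \subset H$, for instance the one generated by the columns of $\rL$ itself, or equivalently by the cocircuit vectors of the graphic matroid of $G$ --- the two are identified thanks to the lattice equality promised in the abstract. Applying Shephard's formula to the columns of $\rL$ together with the spectral decomposition yields $\vol_{\Lambda}(\rZ^{*}) = \prod \lambda_{i}$; the factor $\|\vecone\|^{2} = n$ that surfaces when passing between the pseudo-determinant of $\rL$ and the lattice covolume of $\Lambda$ is precisely what will eventually produce the factor of $n$ on the right-hand side.

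The heart of the argument is the dissect-and-rearrange step: cut $\rZ^{*}$ into polyhedral pieces and translate them by vectors of $\Lambda$ to reassemble exactly $n$ disjoint copies of $\rZ(B)$, proving $\vol_{\Lambda}(\rZ^{*}) = n \cdot \vol_{\Lambda}(\rZ(B))$. Combined with the two volume computations, this gives $\prod \lambda_{i} = n s$, exactly Theorem~\ref{thm:MatrixTree}. The main obstacle is producing this dissection explicitly: one must construct a lattice-equivariant bijection between the Minkowski cells of the two zonotopes that makes the $n$-to-$1$ combinatorial structure manifest, and the construction must be robust enough to extend, as the abstract advertises, from graphs to arbitrary regular matroids.
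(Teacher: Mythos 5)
Your plan shares the paper's skeleton---two zonotopes with lattice-normalized volumes $s$ and $\prod\lambda_i$, joined by a dissect-and-rearrange argument---but both load-bearing steps are left as gaps. First, the claim that ``Shephard's formula plus the spectral decomposition'' gives a lattice-normalized volume of $\prod\lambda_i$ for $\rZ(\rL)$ is not an argument: Shephard's decomposition writes $\vol\rZ(\rL)$ as a \emph{sum}, over the $n$ bases of $\MM(\rL)$, of parallelepiped volumes, and there is no way to collapse that sum into a product of eigenvalues simply by diagonalizing $\rL$. The identity is true, but proving it is a substantial part of the work. The paper's route introduces the auxiliary full-rank matrices $\Lambda=\rL+\vecone\vecone^\top$ and $\Gamma=[\rL\,|\,\vecone]$, proves a lattice-prism volume formula (Proposition~\ref{proposition:prism}) to obtain $\vol\rZ(\Gamma)=n\vol\rZ(\rL)$, and proves a barycenter-rearrangement theorem (Theorem~\ref{thm:thm4}) to obtain $\vol\rZ(\Gamma)=\vol\rZ(\Lambda)=n\prod\lambda_i$. (A shorter route would factor $\rZ(\rL)$ as the image under $\rL$ restricted to $H=\vecone^\perp$ of the orthogonal projection of the unit cube onto $H$, and invoke Theorem~\ref{theorem:associatedVolumes} to evaluate the volume of that projected cube; but that is a different argument from the one you wrote down, and it is also not the one the paper uses.)

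Second, you correctly flag the dissection of $\rZ(\rL)$ into $n$ translated copies of $\rZ(\rN)$ as ``the main obstacle,'' and then leave its construction open. The paper never builds such a dissection directly; it decouples the problem into two independent lemmas. Proposition~\ref{proposition:ZLDecomposition} shows, by a lattice argument rather than a geometric one, that $\rZ(\rL)$ decomposes into $n$ equivolume parallelepipeds, each a fundamental domain of ${}_{\Z}\langle\rL\rangle$; and Theorem~\ref{thm:polytopalMMTTcorank}, powered by the cocircuit-lattice identification of Theorem~\ref{thm:cocircuitSpace}, then gives a dissect-and-rearrange from any one of these parallelepipeds to $\rZ(\rN)$. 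You would need both of these results, or an explicit construction of your $n$-to-$1$ dissection, before your sketch becomes a proof.
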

A classical proof of this theorem proceeds as follows (see \cite{aigner2000proofs}, \cite{brouwer2011spectra}).
First one shows that every principal minor of $\rL$ is equal to the sum of the squares of the maximal minors of the signed vertex-edge incidence matrix of $G$.
Then one computes that such a minor is equal to $\pm1$ if the edges of $G$ corresponding to the columns of the submatrix span a tree.
Finally, one verifies that the maximal principal minor is precisely $1/n$ times the product of the nonzero eigenvalues of~$\rL$ using the characteristic polynomial.

Many generalizations of the theorem exist including for weighted graphs, simplicial complexes, and regular matroids.
In the last case, the regular matroid matrix tree theorem is the following particular case of Theorem 3 in \cite{maurer1976matrix}. 
	\begin{thm}
	\label{theorem:MatroidMTT}
		Let $\MM$ be a rank $d$ regular matroid represented by a $d \times n$ unimodular matrix $\rM$ of full rank, and let $\rL = \rM \rM^{\top}$.
		Then the number of bases of $\MM$ is $\lambda_{1}\cdots \lambda_{d}$, where $\lambda_{1},\dots, \lambda_{d}$ are the eigenvalues of $\rL$ .
	\end{thm}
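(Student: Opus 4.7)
My plan is to exhibit the number of bases of $\MM$ and the product $\lambda_1\cdots\lambda_d = \det(\rL)$ as two expressions for one and the same geometric quantity: the $d$-dimensional volume of the zonotope $\rZ(\rM) := \rM[0,1]^n \subset \R^d$ generated by the columns of $\rM$. Since $\rM$ has full row rank $d$, the matrix $\rL = \rM\rM^\top$ is symmetric and positive definite, so elementary spectral theory already gives $\lambda_1\cdots\lambda_d = \det(\rL)$, and the theorem reduces to the pair of volume identities
\[
\#\{\text{bases of }\MM\} \;=\; \vol\bigl(\rZ(\rM)\bigr) \;=\; \det(\rL).
\]

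The first identity I would obtain from the classical zonotope volume formula $\vol(\rZ(\rM)) = \sum_{B} |\det \rM_B|$, summed over the $d$-subsets $B \subseteq [n]$, which is proved by dissecting $\rZ(\rM)$ into parallelepipeds indexed by bases (no Cauchy-Binet required). Because $\rM$ is unimodular, each summand is $0$ or $1$, the latter precisely when $B$ is a basis of $\MM$, so the sum collapses to the number of bases.

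The second identity is the geometric heart of the argument. I would show that the translates of $\rZ(\rM)$ by the lattice $\rL\Z^d = \rM\rM^\top\Z^d$ tile $\R^d$; granted this, $\rZ(\rM)$ is a fundamental domain for the lattice and therefore has volume equal to its covolume $|\det \rL|$. The key input is the auxiliary result flagged in the abstract: the cocircuit lattice of the oriented regular matroid represented by $\rM$ coincides with the row lattice $\rM^\top\Z^d \subset \Z^n$. Pushing this identification forward by $\rM$ identifies the putative tiling lattice as exactly $\rL\Z^d$, and interprets a translation of $\rZ(\rM)$ across one of its facets as a translation by the $\rM$-image of a signed cocircuit. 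This is the dissect-and-rearrange picture promised in the abstract: each parallelepiped piece of the Shephard dissection of $\rZ(\rM)$ is transported by an element of $\rL\Z^d$ into a chosen reference region.

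The hard part will be verifying that these translates genuinely tile $\R^d$, i.e.\ that they both cover $\R^d$ and have pairwise disjoint interiors. Covering should reduce to showing that leaving $\rZ(\rM)$ through any facet corresponds to a cocircuit translation, and disjointness should follow from the bijection between the parallelepiped cells of the Shephard dissection and the bases of $\MM$, together with unimodularity. The decisive step — and the step that uses the regular/unimodular hypothesis in an essential way — is really the cocircuit-lattice identification, because it is what ties the combinatorial description of the facets of $\rZ(\rM)$ to the arithmetic lattice $\rL\Z^d$ needed to read off $\det(\rL)$.
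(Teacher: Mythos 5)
Your proposal is correct and rests on the same three pillars as the paper's argument: (a) Stanley's half-open parallelepiped dissection of $\rZ(\rM)$, which together with unimodularity gives $\vol\rZ(\rM) = |\BB(\MM)|$ (Corollary~\ref{corollary:volumeOfTUzonotope}); (b) Shephard's theorem that the regular-matroid zonotope $\rZ(\rM)$ tiles $\R^d$ facet-to-facet (Theorem~\ref{theorem:spaceTiling}); and (c) the cocircuit--lattice identification ${}_{\Z}\langle\CC^*\rangle = {}_{\Z}\langle\rM^\top\rangle$ (Theorem~\ref{thm:cocircuitSpace}), pushed forward by $\rM$ to identify the tiling lattice with $\rL\Z^d$ (Corollary~\ref{corollary:taleOfTwoLattices}). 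Where you depart from the paper is the last step. You close the argument abstractly: once $\rZ(\rM)$ is known to be a fundamental domain for $\rL\Z^d$, its volume equals the covolume $\det\rL$, and you are done. The paper instead carries out the ``dissect-and-rearrange'' explicitly: it cuts the centered zonotope $\rZ_0(\rM)$ into the $2^d$ pieces $P_\epsilon = \sigma_\epsilon \cap \rZ_0(\rM)$ along the simplicial cones spanned by the signed columns of $\rL$, translates each $P_\epsilon$ by $v_{\epsilon^-} = \sum_{\epsilon_i=-}\rL_i$, and verifies directly that the translated pieces reassemble into the parallelepiped $\rZ(\rL)$ with overlaps of measure zero. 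Your version is shorter and conceptually tidier for the full-rank case; the paper's explicit rearrangement is more laborious but pays off later, because it adapts verbatim to the corank-$\ge 1$ situation of Theorem~\ref{thm:polytopalMMTTcorank} and Section~\ref{section:graphicCase}, where $\rZ(\rL)$ is no longer a parallelepiped and the covolume shortcut is unavailable.

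One small misdirection in your sketch: you suggest that disjointness of the interiors of the translates $\rZ(\rM)+\rL\Z^d$ ``should follow from the bijection between the parallelepiped cells of the Shephard dissection and the bases of $\MM$.'' That dissection of $\rZ(\rM)$ into basis-indexed parallelepipeds is what yields the \emph{count} $\vol\rZ(\rM)=|\BB|$; it has nothing to do with why the lattice translates of $\rZ(\rM)$ don't overlap. Disjointness is part of the content of the tiling theorem itself: Shephard's theorem says translates of $\rZ(\rM)$ tile $\R^d$, the tiling is facet-to-facet, and hence (by the Venkov--McMullen facet-to-facet theorem, or by the paper's direct boundary argument) the translation group is the lattice generated by the facet-adjacency vectors $\rM C^*$ --- which Corollary~\ref{corollary:taleOfTwoLattices} identifies as $\rL\Z^d$. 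With that correction, your proof is complete.
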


In this paper we recast this result into the domain of polyhedral geometry by considering the zonotopes generated by the columns of the matrices $\rM$ and $\rL$ and proving that the volumes of these two zonotopes are the same although their combinatorial structures are in general vastly different. 
	\begin{thm}
	\label{theorem:polytopalMatroidMTT}
		Let $\rM$ be a unimodular matrix of full rank, and $\rL=\rM\rM^\top$.
		Then the volume of the zonotope $\rZ(\rM)$, the volume of the zonotope $\rZ(\rL)$, and the product of the eigenvalues of $\rL$ are all equal.	
	\end{thm}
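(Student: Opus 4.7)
The plan is to establish the three equalities in stages: the chain $\prod_i \lambda_i = \vol \rZ(\rL)$ is almost immediate from the shape of $\rL$, while the equality $\vol \rZ(\rM) = \vol \rZ(\rL)$ contains the geometric heart of the theorem and is where a dissect-and-rearrange argument is needed.

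For the first equality, observe that since $\rM$ has full row rank $d$, the Gram matrix $\rL = \rM \rM^\top$ is $d \times d$, symmetric, and positive definite. The zonotope $\rZ(\rL)$ is therefore just the parallelepiped $\rL([0,1]^d)$, whose volume is $|\det \rL| = \det \rL = \prod_{i=1}^d \lambda_i$.

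For the second equality, I would invoke the classical half-open dissection of a zonotope into basis parallelepipeds: $\rZ(\rM) \subset \R^d$ decomposes as a disjoint union of half-open tiles $\rZ(\rM_B)$ indexed by the bases $B$ of $\MM(\rM)$. Since $\rM$ is unimodular, each tile has volume $|\det \rM_B| = 1$ and is in fact a fundamental domain for $\Z^d$, so $\vol \rZ(\rM)$ equals the number of bases of $\MM$. On the other side, let $\Lambda := \rL(\Z^d)$; this is a full-rank sublattice of $\Z^d$ of index $\det \rL$, and $\rZ(\rL)$ is a fundamental parallelepiped for $\Lambda$. The plan is to prove that $\rZ(\rM)$ is itself a fundamental domain for $\Lambda$ in $\R^d$, which yields
\[
\vol \rZ(\rM) \;=\; \vol(\R^d / \Lambda) \;=\; \det \rL \;=\; \vol \rZ(\rL).
\]
Concretely, for each basis $B$ I would exhibit a unique lattice vector $\lambda_B \in \Lambda$ such that $\rZ(\rM_B) + \lambda_B \subseteq \rZ(\rL)$, and then verify that the translated tiles $\{\rZ(\rM_B) + \lambda_B\}_B$ partition $\rZ(\rL)$; this is the explicit dissect-and-rearrange promised by the abstract.

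The main obstacle is producing the bijection between bases of $\MM$ and cosets of $\Lambda$ in $\Z^d$ that selects the translates $\lambda_B$, and verifying that the resulting translated tiles cover $\rZ(\rL)$ without overlap. This is precisely where the cocircuit lattice theorem from the previous section should do its work: it identifies $\Lambda$, originally presented as the image of $\Z^d$ under $\rL = \rM\rM^\top$, with the image under $\rM$ of the cocircuit lattice of $\MM$ in $\R^n$. This combinatorial description of $\Lambda$ is what should let one pin down the translations $\lambda_B$ explicitly in terms of cocircuit data, and check the tiling property via matroidal rather than matrix arithmetic, completing the rearrangement and hence the proof without invoking Cauchy--Binet.
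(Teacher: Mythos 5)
Your first equality is fine, and the fundamental-domain skeleton for the second is sound: since $\MM(\rM)$ is regular, $\rZ(\rM)$ tiles $\R^d$ by translates (Shepard, Theorem~\ref{theorem:spaceTiling}), the translation lattice is generated by twice the facet barycenters, and by Theorem~\ref{thm:cocircuitSpace} together with Corollary~\ref{corollary:taleOfTwoLattices} that lattice is exactly ${}_{\Z}\langle\rL\rangle=\rL(\Z^d)$; hence $\vol\rZ(\rM)=\det\rL=\vol\rZ(\rL)$. That is in fact a compressed packaging of what the paper proves, and it is a legitimate shortcut.

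However, the ``concrete'' realization you sketch --- shifting each half-open Stanley cell $\Pi_B$ by a single $\lambda_B\in\Lambda$ --- does not go through. The translation carrying a given $x\in\rZ(\rM)$ into $\rZ(\rL)$ is determined by which translate $\rZ(\rL)-\lambda$ contains $x$, and the resulting pieces $\rZ(\rM)\cap(\rZ(\rL)-\lambda)$ are unrelated to the Stanley decomposition: a generic $\Pi_B$ straddles several of them, so no single $\lambda_B$ maps all of $\Pi_B$ into $\rZ(\rL)$, even though the unique lattice point of $\Pi_B$ does determine a well-defined $\Lambda$-coset. The paper sidesteps this mismatch by dissecting $\rZ_0(\rM)$ not by bases of $\MM$ but by the $2^d$ simplicial cones $\sigma_\epsilon=\posHull\{\epsilon_i\rL_i \mid i\in[d]\}$ indexed by sign vectors $\epsilon\in\{+,-\}^d$; on each piece $P_\epsilon=\sigma_\epsilon\cap\rZ_0(\rM)$ the required translation $v_{\epsilon^-}=\sum_{\epsilon_i=-}\rL_i$ is constant by construction, the shifts land in $\rZ(\rL)$ because the width of $\rZ_0(\rM)$ in direction $\rL_i$ is at most $\|\rL_i\|$, and Corollary~\ref{corollary:taleOfTwoLattices} plus the tiling give the covering. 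So either keep the fundamental-domain argument and drop the explicit rearrangement, or, if you want an explicit dissect-and-rearrange, replace the basis parallelepipeds with the sign-vector cones.
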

When $\rM$ has full row rank, then so does $\rL$ and so the zonotope $\rZ(\rL)$ is a $d$-dimensional parallelepiped and $\rL$ has $d$ real nonzero eigenvalues. 
It follows that the volume of $\rZ(\rL)$ is exactly the determinant of $\rL$, which in turn is the product of the eigenvalues of $\rL$.
This shows that the last two quantities in Theorem \ref{theorem:polytopalMatroidMTT} are equal, and so the crucial part of the proof is to show that the zonotopes have the same volume.

After some preliminary results are presented in Section \ref{section:prelims}, we will prove Theorem \ref{theorem:polytopalMatroidMTT} in Section \ref{section:matroidProof} via a novel dissect-and-rearrange argument. 
In Section \ref{section:weightedCase} we generalize the previous result to the weighted case.
Finally, in Section \ref{section:graphicCase} we give a new polyhedral proof of the classical Matrix Tree Theorem that, while similar to the general proof for full rank matrices in the previous section, copes with the fact that the defining matrices $\rM$ and $\rL$ do not have full rank. The classical proof of the matrix tree theorem involves matrix calculations that rely on the total unimodularity of the signed vertex-edge incidence matrix of a graph $G$, i.e., that one has a totally unimodular representation of the matroid $\MM(G)$. Our polyhedral approach works even when the representation of $\MM(G)$ is only unimodular.

%%%%%%%%%%%%%%%%%%%%%
%%%%%%%%%%%%%%%%%%%%%

%%%%%%%%%%%%%%%%%%%%%
%%%%%%%%%%%%%%%%%%%%%

\section{Preliminaries}
\label{section:prelims}
%%%%%%%%%
%%%%%%%%%
	\subsection{Matrices}
	\label{subsection:matrices}

		Let $M$ be an $m \times n$ matrix.
		For $i \in [m]$ and $j \in [n]$ we write $m_{ij}$ for the entry in the $i^{\text{th}}$ row and $j^{\text{th}}$ column of $M$ and $M_{j}$ for the $j^{\text{th}}$ column of $M$.
		Throughout this paper $\rM$  will denote a $d \times n$ integer matrix of rank $r$.
		The matrix $\rM$ is \defn{unimodular} if all of its maximal minors are in $\{-1,0,1\}$ and is \defn{totally unimodular} if all of its minors are in $\{-1,0,1\}$.
		Note that a unimodular matrix $\rM$ remains unimodular after appending either a column of zeros or a copy of the column $\rM_{j}$.
	 	We define $\rL = \rL(\rM)$ to be the $d\times d$ symmetric matrix given by $\rL := \rM \rM^{\top}$ (where $\rM^{\top}$ denotes the transpose of $\rM$).
	 	As $\rL$ is symmetric, its eigenvalues $\lambda_{1}, \dots, \lambda_{d}$ are all real. 		

	 	For any ring $R$ we denote the set of all $R$-combinations of the columns of $M$ by ${_R}{\left \langle M \right \rangle}$.
	 	Note that when $R=\Z$ and $\rM$ is a unimodular matrix we have ${_{\Z}}{\left \langle \rM \right \rangle}= \Z^{m} \cap \im \rM$.
		We note here that in this paper we reserve the term lattice for a free discrete subgroup of a vector space.
%%%%%%%%%
%%%%%%%%%
	\subsection{Graphs}		
	\label{Graphs}

		A graph $G = (V,E)$ consists of a finite set of vertices $V$ together with a finite multiset of edges $E$ consisting of subsets of $V$ of size two.
		We assume that both $V$ and $E$ are ordered, and if $V$ has $n$ elements we identify it with the set $[n]:= \{1,\dots,n\}$ with the usual ordering.
		A \defn{loop} in $G$ is an edge of the form $\{v,v\}$ and two edges are \defn{parallel} if they are equal (as sets).
		A graph is \defn{connected} if between any two vertices $v$ and $w$ there are edges $\{v,u_{1}\},\{u_{1},u_{2}\},\dots, \{u_{i},w\}$ in $E$.
		A \defn{weighted graph} is a graph $G$ together with an assignment of a weight (a real number in this paper) to each edge.
		An \defn{orientation} of $G$ assigns a direction to each edge, i.e., assigns to each edge $\{v,w\}$ one of two ordered pairs $(v,w)$ or $(w,v)$, where the first (respectively, second) element is called the \defn{tail} (respectively, \defn{head}) of the edge.
		A graph with an orientation is called a \defn{directed graph}, or digraph for short.

		The \defn{vertex-edge incidence matrix} $\rN(G)$ of an (unweighted) digraph $G$ is the matrix with one row for each vertex, one column for each edge, and where the entry $\rN_{v,e}$ is $1$ (respectively, $-1$) if $v$ is the head (resp. tail) of $e$ and $0$ otherwise.
		The \defn{Laplacian} of $G$ is then the matrix $\rL = \rN\rN^{\top}$.
		If $G$ is a weighted digraph with edge weights $\omega = (w_{1}, \dots, w_{m})$ then the \defn{weighted incidence matrix} of $G$ is $\rN_{w}(G) := \rN(G)D$, where $D$ is the $m \times m$ diagonal matrix with $D_{ii} = w_{i}$.
		The \defn{weighted Laplacian} is then $\rL_{w} := \rN_{w}\rN^{\top} = \rN D \rN^{\top}$.

%%%%%%%%%
%%%%%%%%%
	\subsection{Matroids}		
	\label{subsection:Matroids}

		We now give the pertinent definitions and facts about matroids and oriented matroids, essentially following \cite{oxley1992matroid} and \cite{bjorner1999oriented}, respectively. 
		A \defn{matroid} $\MM = (E, \II)$ is an ordered pair consisting of a ground set $E$ and a collection $\II$ of subsets of $E$ that satisfy the following \defn{independent set axioms}:
			\begin{enumerate}[\bfseries{I}1] %[label=\bfseries I\arabic*:]
				\item $\emptyset \in \II$;
				\item $\II$ is closed with respect to taking subsets; and
				\item if $I_{1},I_{2} \in \II$ with $|I_{1}|\le |I_{2}|$, then there is some $e \in I_{2} \setminus I_{1}$ such that $I_{1}\cup\{e\} \in \II$. 
			\end{enumerate}
		The \defn{bases} $\BB$ of a matroid $\MM$ is the subset of $\II$ consisting of independent sets of maximal size.
		Clearly, the sets $\BB$ and $\II$ of a matroid determine each other.

		Let $M$ be an $m \times n$ matrix with entries in a field $\mathbb{F}$. 
		Then the archetypal example of an independent set matroid is $\MM = ([n], \II(M))$ where $[n]:=\{1,\dots, n\}$ is an indexing set for the columns of $M$ and $\II(M)$ consists of all subsets of (indices of) columns of $M$ that are linearly independent in the $m$-dimensional vector space $V(m,\mathbb{F})$ over $\mathbb{F}$.
		In this case we write $\MM = \MM(M)$.
		A matroid $\MM$ is called $\mathbb{F}$-\defn{representable} if there exists a matrix~$M$ with entries in $\mathbb{F}$ such that $\MM = \MM(M)$.
		It is immediate that $B \in \BB$ is a basis of an $\mathbb{F}$-representable matroid $\MM(M)$ if and only if the collection $\{M_{i} \suchthat i \in B\}$ is a basis for $V(m,\mathbb{F})$.

		Let $\MM=([n],\II)$ be an arbitrary matroid. 
		A subset $C \subset [n]$ is a \defn{circuit} of $\MM$ if $C$ is a minimal dependent set.
		The set of circuits of $\MM$ is denoted $\CC = \CC(\MM)$.
		A matroid $\MM$ is \defn{connected} if for every pair of elements $e\neq f$ in $E$ there is a circuit $C \in \CC$ containing both. 
		A \defn{loop} of $\MM$ is a singleton that is also a circuit.
		Two elements $f,g \in \MM$ are \defn{parallel} if $\{f,g\}$ is a circuit.

		Given a matroid $\MM=([n],\II)$ with bases $\BB$, let $\BB^{*} = \{E - B \suchthat B \in \BB\}$ and let $\II^{*}$ be the collection of all subsets of elements of $\BB^{*}$.
		Then the matroid $\MM^{*} :=([n],\II^{*})$ is the \defn{dual} matroid of $\MM$.
		The sets $\BB^{*},\II^{*},\CC^{*}$ of bases, independent sets, and circuits of the dual $\MM^{*}$ are called, respectively, the \defn{cobases}, \defn{coindependent sets} and \defn{cocircuits} of~$\MM$.		

		Let $\CC = \{C_{1}, \dots, C_{m}\}$ be the circuits of $\MM$.
		Then the \defn{circuit incidence matrix} of $\MM$ is the $m \times n$ matrix $A(\CC)$ with entry $a_{ij}$ equal to 1 if $j$ is in $C_{i}$ and equal to 0 otherwise.
		The cocircuit incidence matrix $A(\CC^{*})$ is defined analogously.
		The matroid~$\MM$ is \defn{orientable} if one replace some of the nonzero entries of $A'(\CC)$ and $A'(\CC^{*})$ by~$-1$ such that, if a circuit $C$ and cocircuit $C^{*}$ have nonempty intersection, then both of the sets $\{i \in [n] \suchthat \a_{C}(i) = \a_{C^{*}}(i) \neq 0\}$ and $\{i \in [n] \suchthat \a_{C}(i) = - \a_{C^{*}}(i) \neq 0\}$ are nonempty.

		In this paper we will work with the class of \defn{regular} matroids characterized in the following theorem. (See Lemma 2.2.21, Theorem 6.6.3, and Corollary 13.4.6 in \cite{oxley1992matroid}.)
		\begin{thm}
		\label{theorem:regularMatroidCharacterization}
			For a matroid $\MM$ the following are equivalent:
			\begin{enumerate}[\upshape(1)]
				\item $\MM$ is regular;
				\item $\MM$ is $\F_{2}$-representable and orientable;
				\item $\MM$ is representable over $\R$ by a unimodular matrix;
				\item $\MM$ is representable over $\R$ by a totally unimodular matrix; and
				\item the dual of $\MM$ is regular.			
			\end{enumerate}
			Moreover, if $\MM$ is regular,  $\rM$ is a totally unimodular matrix that represents $\MM$ over $\R$ and $\mathbb{F}$ is any other field, then $\rM$  is an $\mathbb{F}$-representation of $\MM$ when viewed as a matrix over $\mathbb{F}$.
		\end{thm}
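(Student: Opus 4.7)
The plan is to establish the cycle $(4)\Rightarrow(3)\Rightarrow(2)\Rightarrow(4)$ among conditions (2)--(4), and then derive the remaining items (1), (5), and the ``moreover'' clause by separate arguments using the totally unimodular (TU) representation from (4). Under the usual convention in which (4) is taken as the definition of \emph{regular}, so that $(1)\Leftrightarrow(4)$ is tautological, the work reduces to the three implications of the cycle plus a duality argument for (5) and a mod-$p$ reduction for the moreover statement.

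The easy links in the cycle are $(4)\Rightarrow(3)$, which is immediate since TU implies unimodular, and $(4)\Rightarrow(2)$. For the latter I would reduce a TU representation $\rM$ modulo $2$: because every maximal minor of $\rM$ lies in $\{-1,0,1\}$, a column subset is linearly dependent over $\R$ if and only if it is linearly dependent over $\F_2$, giving $\F_2$-representability. Orientability is then extracted by reading off the signs $\pm 1$ of the entries of $\rM$ to produce signed circuits and cocircuits, and checking the required intersection property, which reduces to the orthogonality $\rM\,\x = \veczero$ for $\x$ indexing a signed circuit together with the dual identity for cocircuits.

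The main obstacle is $(3)\Rightarrow(4)$: upgrading a merely unimodular representation to a TU one. My approach is to pivot. Apply integer-invertible row operations together with a column permutation to bring $\rM$ into standard form $[\rI_r \mid A]$; such operations preserve unimodularity because their determinants are $\pm 1$, so every $r\times r$ minor of the transformed matrix still lies in $\{-1,0,1\}$. I would then invoke the classical lemma that for matrices of the shape $[\rI_r \mid A]$, unimodular and TU are equivalent, and both are in turn equivalent to $A$ itself being TU. Proving that lemma requires a careful induction on submatrix size via Laplace expansion, and I expect the sign-tracking bookkeeping to be the most delicate part of the entire argument.

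Given $(3)\Rightarrow(4)$, the remaining pieces are cleaner. For $(2)\Rightarrow(4)$ I would use the $\F_2$-representation to locate the supports of circuits and the orientation to assign compatible $\pm 1$ signs, producing an integer matrix whose maximal minors are $\pm 1$ on bases by a fundamental-circuit argument; then $(3)\Rightarrow(4)$ upgrades unimodular to TU. For (5), if $\rM=[\rI_r\mid A]$ is a TU representation of $\MM$, then $[-A^{\top}\mid \rI_{n-r}]$ is a TU representation of $\MM^{*}$, so regularity is self-dual. Finally, the moreover clause follows because every minor of a TU matrix is $0$ or a unit in any ring, so the family of independent column subsets is invariant under base change to an arbitrary field.
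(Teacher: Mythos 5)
The paper itself does not prove Theorem~\ref{theorem:regularMatroidCharacterization}: it is stated as background with an explicit pointer to Lemma~2.2.21, Theorem~6.6.3, and Corollary~13.4.6 of Oxley's book. So any proof you write is necessarily a ``different route'' from the paper, which takes none. With that understood, let me assess your outline on its own merits.

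Most of your plan is sound. The implication $(3)\Rightarrow(4)$ is handled correctly: row-reducing a unimodular $\rM$ by an integer matrix of determinant $\pm1$ and permuting columns preserves unimodularity, and the lemma you invoke---that a matrix of the form $[\rI_r\mid A]$ is unimodular iff it is totally unimodular iff $A$ is totally unimodular---is a genuine and not-too-hard fact (pad any $k\times k$ submatrix of $A$ by unit columns to an $r\times r$ submatrix of $[\rI_r\mid A]$ and Laplace-expand along them). Your duality argument for (5) and the mod-$p$ reduction for the ``moreover'' clause are both standard and correct.

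The genuine gap is $(2)\Rightarrow(4)$. This is Tutte's theorem (the content of Oxley's Theorem~6.6.3) that a binary orientable matroid is regular, and it is by far the deepest implication in the list. Your sketch---``use the $\F_2$-representation to locate the supports of circuits and the orientation to assign compatible $\pm1$ signs, producing an integer matrix whose maximal minors are $\pm1$ on bases by a fundamental-circuit argument''---assumes precisely what must be proved. Given a signing of the fundamental circuits relative to one basis, there is no a~priori reason the resulting $\{-1,0,1\}$-matrix is unimodular, nor that the signs can be chosen compatibly for all circuits at once; showing this requires either Tutte's chain-group machinery, a minor-exclusion argument ($U_{2,4}$, $F_7$, $F_7^{*}$), or the Bland--Las~Vergnas analysis of orientable binary matroids. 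None of these is captured by invoking a ``fundamental-circuit argument.'' You should either fill this in with one of those approaches or, more in the spirit of how the paper uses this theorem, simply cite the reference for this implication. A smaller point: in $(4)\Rightarrow(2)$, orientability is not really ``read off from the signs of the entries of $\rM$''; it comes from the oriented matroid of the real vector configuration, i.e., from the signed support patterns in $\ker\rM$ and the row space of $\rM$, not from the entries themselves. Finally, your treatment of (1) as definitionally equal to (4) is a choice; Oxley's definition of \emph{regular} is ``representable over every field,'' under which $(1)\Leftrightarrow(4)$ is itself a (short) theorem and the ``moreover'' clause is closer to tautological---worth stating which convention you are using.
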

		As noted in Subsection \ref{subsection:matrices}, a unimodular matrix $\rM$ remains unimodular after adding either a column of zeros or a copy of the column $M_{j}$.
		In matroid terminology, adding a column of zeros corresponds to adding a loop to the matroid $\MM(\rM)$ while adding a copy of a column gives a parallel element.

		If $\MM$ is an $\R$-representable matroid on $n$ elements with representation $M$ and if $D$ is any $n \times n$ diagonal matrix with nonzero real entries on the diagonal, then the matroids $\MM = \MM(M)$ and $\MM(MD)$ are isomorphic.
		By way of analogy with the graphical case, we write $M_{w}$ for $MD$, where $w_{i} = D_{ii}$, and call $M_{w}$ a \defn{weighted representation} of $\MM$.
		We also define the \defn{weighted Laplacian} of $\MM(M)$ with respect to $D$ to be~$L_{w} := MDM^{\top}$.

%%%%%%%%%%
	\subsection{Oriented Matroids}
	\label{subsection:orientedMatroids}
		We now turn to oriented matroids that are representable over $\R$.	
		Let $\AA = \{\a_{1}, \dots, \a_{n}\} \subset \R^{d}$ be a vector configuration that spans the vector space $\R^{d}$.
		Then the \defn{covectors} of the oriented matroid $\MM(\AA)$ are the elements of the set 
			\begin{align*}
				\mathcal V^{*} :&= \{(\sign f(\a_{1}), \dots, \sign f(\a_{n})) \suchthat f: \R^{d} \to \R \text{ linear functional}\}\\
				&\subseteq \{-1,0,1\}^{n}.
			\end{align*}
		The \defn{cocircuits} of the oriented matroid $\MM(\AA)$ are the minimal elements of the poset $(\mathcal{V}^{*}, \prec)$, where the relation $\prec$ is defined by extending $0 \prec \pm 1$ component-wise.
		In the next subsection, we recall how this poset is related to the face poset of the zonotope generated by the matrix $M$ whose columns are the $\a_{i}$.
		Here we conclude by recalling how to retrieve the covectors of the oriented matroid $\MM(\AA)$ from a certain subspace arrangement in $\R^{n}$ and how to obtain the underlying unoriented matroid $\underline{\MM}(\AA)$.

		Let $M$ be the matrix whose columns are $\a_{1}, \dots, \a_{n}$.
		Then the covectors of $\MM(\AA)$ can be read off from the hyperplane arrangement induced by the coordinate hyperplanes of~$\R^{n}$ in the rowspace of $M$. 
		To see this first consider the $n$ columns of $M$ as elements of the dual vector space $(\R^{d})^{*}$.
		Then each $\a_{i}$ defines a hyperplane in $\R^{d}$ given by $\mathcal{H}_{i} := \{x \in \R^{d} \suchthat \left \langle \a_{i}, x \right \rangle = 0\}$ for $i \in [n]$.
		Defining $\mathcal{H}_{i}^{+} := \{x \in \R^{d} \suchthat \left \langle \a_{i}, x \right \rangle > 0\}$ and $\mathcal{H}_{i}^{-} = \R^{d}\setminus(\HH_{i} \cup \HH_{i}^{+})$, assign to each $x \in \R^{d}$ a sign vector $\sigma(x) \in \{-1,0,1\}^{n}$ whose $i^{\text{th}}$~coordinate is $1$ (respectively, $0,-1$) if $x$ is in $\mathcal{H}_{i}^{+}$ (respectively, $\mathcal{H}_{i}, \HH_{i}^{-}$).
		The set of all points in $\R^{d}$ that receive the same sign vector $\sigma$ form a relatively open topological cell (which we label with $\sigma$) and the union of all such cells is $\R^{d}$.
		The sign vectors that occur are precisely the covectors of the oriented matroid $\MM(\AA)$, and the sign vectors that label $1$-dimensional cells are the cocircuits.

		Now consider the subspace arrangement in the rowspace of $M$ induced by the coordinate hyperplane arrangement in $\R^{n}$ (oriented in the natural way), which we denote by $\mathscr{H}(M)$. 
		A point $y$ in the rowspace of $M$ satisfies $y_{i} = 0$ (respectively, $y_{i}> 0, y_{i}< 0$) if and only if any point $x \in \R^{d}$ with $y = M^{\top}\!x$ lies on the hyperplane $\HH_{i}$ (respectively, in $\HH_{i}^{+}, \HH_{i}^{-}$) as defined in the previous paragraph.
		So the oriented matroid coming from the hyperplane arrangement in the rowspace of $M$ induced by the coordinate hyperplanes in $\R^{n}$ is exactly $\MM(\AA)$.
		
		The preceding discussion tells us that the rowspace of $M$ intersects exactly those cells of the coordinate hyperplane arrangement labeled by the covectors of $\MM(M)$.
		It should be noted, however, that in general the covectors themselves do not lie in the rowspace of $M$ even when $M$ is a totally unimodular matrix (see the upcoming Remark~\ref{rem:covectors}).
		We show in Theorem \ref{thm:cocircuitSpace} that, when $\rM$ is a unimodular matrix, every cocircuit of the oriented matroid $\MM(\rM)$ does lie in the rowspace of $\rM$, and in fact, the set of cocircuits is a spanning set for the lattice ${_{\Z}}\!\left \langle \rM^{\top} \right \rangle$. 

		Finally, to each oriented matroid $\MM$ one associates the underlying unoriented matroid~$\underline{\MM}$ whose cocircuits are obtained from the cocircuits of $\MM$ by forgetting signs, i.e., if~$C^{*}$ is a cocircuit of $\MM$, then $\underline{C}^{*}$ is a cocircuit of $\underline{\MM}(M)$ where $(\underline{C}^{*})_{i} = |(C^{*})_{i}|$.
		An oriented matroid is \defn{regular} if its underlying unoriented matroid is.
		Many statistics of an orientable matroid (e.g., the number of bases or the number of independent sets) remain invariant after orientation, and so when discussing these properties with respect to a given matroid $\MM(M)$, we often disregard the difference between the oriented matroid and the underlying unoriented matroid when no confusion can arise.

%%%%%%%%%
%%%%%%%%%
	\subsection{Zonotopes}

		A \defn{zonotope} is a Minkowski sum of a finite number of line segments.
		For an $m \times n$ matrix $M$ the \defn{zonotope} generated by $M$, denoted $\rZ(M)$, is the Minkowski sum of the line segments $\conv\{\veczero, M_{i}\}$.
		If $M$ has rank $d$ then the zonotope $\rZ(M)$ is a $d$-dimensional convex polytope that is centrally symmetric about its barycenter.
		We write $\rZ_{0}(M)$ for the translated copy of $\rZ(M)$ whose barycenter is at the origin, i.e., the Minkowski sum  $\sum_{i\in[n]} S_i$ where $S_i = \conv\{-\frac{1}{2}\rM_{i}, \frac{1}{2}\rM_{i}\}$. 
		
		A parallelepiped is \defn{half open} if it is it the Minkowski sum of half-open line segments. The next result, due to Stanley, gives a decomposition of a zonotope into half-open parallelepipeds of various dimensions. 
		
		\begin{thm}[\cite{stanley1991zonotope} Lemma 2.1]
		\label{theorem:zonotopeDecomp}
			Let $M$ be a rank $d$ matrix and $\II$ be the independent sets of the matroid $\MM(M)$.
			Then the zonotope $\rZ(M)$ is the disjoint union of half-open parallelepipeds 
			\[\Pi_{I} := \left\{\sum_{i \in I}\alpha_{i}\widetilde{M_{i}} \suchthat \alpha_{i} \in [0,1)\right \},\]
			where $\widetilde{M_{i}}$ is either $M_{i}$ or $-M_{i}$.
		\end{thm}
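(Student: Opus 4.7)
My plan is to induct on $n$, the number of columns of $M$. The base case $n = 0$ is immediate, since $\rZ(M) = \{\veczero\} = \Pi_{\emptyset}$. For the inductive step, I would set $M' = (M_1, \ldots, M_{n-1})$ and let $\II'$ be the independent sets of $\MM(M')$. By the inductive hypothesis, $\rZ(M') = \bigsqcup_{I \in \II'} \Pi_I$ for some coherent sign choice $\widetilde{M_i}$, $i \in [n-1]$. Since $\rZ(M) = \rZ(M') + \conv\{\veczero, M_n\}$, the task reduces to understanding how the Minkowski sweep by $M_n$ rearranges the existing pieces into a decomposition indexed by $\II$.

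To orient the generators coherently, I would fix a linear functional $\phi : \R^d \to \R$ with $\phi(M_i) \neq 0$ for every $i$, and set $\widetilde{M_i} := \sign(\phi(M_i))\, M_i$. Two cases then arise. If $M_n$ is linearly independent from the columns of $M'$, the rank grows by one, $\II = \II' \sqcup \{I \cup \{n\} : I \in \II'\}$, and each translate $\Pi_I + \conv\{\veczero, M_n\}$ splits cleanly as $\Pi_I \sqcup \Pi_{I \cup \{n\}}$; pieces across different $I$'s remain disjoint because $M_n$ escapes the affine hull of the columns of $M'$.

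The main obstacle is the second case, where $M_n$ lies in ${_\R}\langle M' \rangle$. Now $I \cup \{n\} \in \II$ only when $M_n \notin {_\R}\langle M_i : i \in I \rangle$, and different translates $\Pi_I + \conv\{\veczero, M_n\}$ may overlap one another. The heart of the argument is to show that these overlaps account exactly for those $I \in \II'$ with $I \cup \{n\}$ dependent, so that after cancellation the newly swept-out cells $\Pi_J$ for $J \in \II \setminus \II'$ fill the remainder without gaps. I would handle this via a circuit-exchange bookkeeping: given a dependent $I \cup \{n\}$, the unique circuit $C \subseteq I \cup \{n\}$ singles out a distinguished element $j(I) \in C \setminus \{n\}$ (for instance, the one minimizing $|\phi(M_j)|$), yielding a bijection between offending index sets and the new valid ones $(I \setminus \{j(I)\}) \cup \{n\} \in \II$. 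A cleaner alternative is a volume-theoretic shortcut: the containment $\bigcup_{I \in \II} \Pi_I \subseteq \rZ(M)$ follows directly from the definitions, and the top-dimensional pieces $\Pi_B$ for $B \in \BB$ are disjoint by general position of $\phi$ and sum to the known zonotope volume $\sum_{B \in \BB} |\det M_B| = \vol \rZ(M)$, which then forces equality on the lower-dimensional strata as well.
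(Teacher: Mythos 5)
The paper does not prove this statement (it cites Stanley \cite{stanley1991zonotope}, Lemma~2.1), so there is nothing internal to compare against; judging your sketch on its own, there are genuine gaps in the hard case. First, a bookkeeping issue that propagates everywhere: as transcribed in the paper, the statement is missing the word \emph{translate}---every $\Pi_I$ as literally defined contains $\veczero$, so no two of them can possibly be disjoint. Stanley asserts a disjoint union of \emph{translates} of such half-open parallelepipeds, and your proof, which also keeps the cells based at the origin (you write $\rZ(M') = \bigsqcup_{I\in\II'}\Pi_I$), inherits the confusion. Even in your ``easy'' Case~1, with $\widetilde{M_n}=M_n$ one has $\Pi_I + \conv\{\veczero, M_n\} = (\Pi_I + M_n) \sqcup (\Pi_I + [0,M_n))$, not $\Pi_I \sqcup \Pi_{I\cup\{n\}}$: the old cell must itself be shifted by $M_n$, so the translations of the $\Pi_I$ change at every inductive step and must be carried along.

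The real gap is Case~2, and neither of your two routes through it works as written. The circuit-exchange idea supplies only a combinatorial bijection between index sets; it does not show that the \emph{geometric} overlap of the swept cell $\Pi_I + (0,M_n]$ with the previously placed tiles is exactly a translate of $\Pi_{(I\setminus\{j(I)\})\cup\{n\}}$, which is the entire content of the lemma. The volume shortcut is worse off: the containment $\bigcup_I \Pi_I \subseteq \rZ(M)$ is false as stated (a cell $\Pi_{\{i\}}$ with $\widetilde{M_i}=-M_i$ points out of the zonotope, and with the correct translates the containment is exactly what must be proved); disjointness of the $\Pi_B$ ``by general position of $\phi$'' is asserted but not argued, and without translates they all share $\veczero$; the identity $\vol\rZ(M)=\sum_{B}|\det M_B|$ is in most treatments a \emph{consequence} of this very lemma (that is how the paper deduces Corollary~\ref{corollary:volumeOfTUzonotope}), so invoking it risks circularity; and a measure coincidence on top-dimensional cells says nothing about whether the lower-dimensional $\Pi_I$ fill the boundary strata disjointly. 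A route that avoids the delicate sweep entirely is to assign each $x\in\rZ(M)$ a canonical preimage, e.g.\ the $\phi$-maximizing (or lexicographically largest) vertex $\alpha(x)$ of the fiber polytope $\{\alpha\in[0,1]^n: M\alpha=x\}$; the set $\{i:\alpha_i(x)\in(0,1)\}$ is automatically independent because it is a vertex of a transverse intersection of the cube with an affine subspace, the coordinates equal to $0$ or $1$ determine the translate, and uniqueness of the canonical preimage gives disjointness for free. Your generic $\phi$ is pointing in this direction, but it should be used to select a preimage of each point rather than to re-sign the generators globally.
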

		
		The parallelepipeds of maximal dimension in the above theorem are generated by maximal independent subsets of the columns of $M$.
		As their union covers $\rZ(M)$ up to a set of measure zero, it follows that the volume of the zonotope is the sum of the volumes of the parallelepipeds generated by the bases of $\MM(M)$.
		For a fixed basis $B \in \BB(M)$, the volume of $\Pi_{B}$ is simply the absolute value of the determinant of (the matrix whose columns are elements of) $B$.
		When $M$ is unimodular each of these determinants is
		$\pm 1$ and so we have the following corollary:
		\begin{cor}
		\label{corollary:volumeOfTUzonotope}
		The volume of a zonotope generated by a  unimodular matrix $\rM$ is equal to the number of bases in the  regular matroid $\MM(\rM)$, i.e., $\vol (\rZ(\rM)) = |\BB(\rM)|$.	
		\end{cor}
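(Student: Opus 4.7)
The plan is to read off the statement directly from Stanley's decomposition (Theorem~\ref{theorem:zonotopeDecomp}) together with the defining property of unimodularity, essentially formalizing the paragraph preceding the corollary. Since unimodularity is precisely the numerical condition that makes every top-dimensional piece in Stanley's decomposition have unit volume, almost nothing remains to be done beyond bookkeeping.

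More concretely, I would proceed as follows. First, invoke Theorem~\ref{theorem:zonotopeDecomp} to write $\rZ(\rM)$ as a disjoint union $\bigsqcup_{I \in \II(\rM)} \Pi_I$ of half-open parallelepipeds indexed by the independent sets $\II(\rM)$ of $\MM(\rM)$. Each $\Pi_I$ lies in the linear span of the columns $\{\rM_i : i \in I\}$, so $\dim \Pi_I = |I|$, and those $\Pi_I$ with $|I| < d$ contribute zero $d$-dimensional Lebesgue measure. Hence
\[
\vol(\rZ(\rM)) \;=\; \sum_{B \in \BB(\rM)} \vol(\Pi_B).
\]

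Next, for each basis $B \in \BB(\rM)$ the half-open parallelepiped $\Pi_B = \{\sum_{i \in B} \alpha_i \widetilde{\rM_i} : \alpha_i \in [0,1)\}$ is a fundamental domain of the lattice spanned by $\{\widetilde{\rM_i}\}_{i \in B}$ inside $\im \rM$, so its $d$-dimensional volume equals $|\det \rM_B|$, where $\rM_B$ denotes the $d \times d$ submatrix of $\rM$ with columns indexed by $B$ (the signs $\widetilde{\rM_i} = \pm \rM_i$ clearly do not affect the absolute value of the determinant). Since $\rM$ is unimodular by hypothesis, every maximal minor lies in $\{-1,0,1\}$, and because $B$ is a basis of $\MM(\rM)$ we have $\det \rM_B \ne 0$; therefore $|\det \rM_B| = 1$.

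Combining these two observations,
\[
\vol(\rZ(\rM)) \;=\; \sum_{B \in \BB(\rM)} |\det \rM_B| \;=\; \sum_{B \in \BB(\rM)} 1 \;=\; |\BB(\rM)|,
\]
which is the claim. There is no genuine obstacle here: Stanley's theorem does all the geometric work and unimodularity supplies the numerical identity $|\det \rM_B| = 1$. The only minor care needed is to justify discarding the lower-dimensional $\Pi_I$, which is immediate from the fact that a parallelepiped of dimension less than $d$ has vanishing $d$-dimensional volume.
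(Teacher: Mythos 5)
Your argument is correct and is essentially identical to the paper's: both invoke Stanley's decomposition (Theorem~\ref{theorem:zonotopeDecomp}), discard the lower-dimensional pieces as measure zero, and use unimodularity to see that each top-dimensional $\Pi_B$ has volume $|\det \rM_B| = 1$. Nothing is missing or different in substance.
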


		A zonotope $\rZ(M) \subset \R^{d}$ generated by a representation $M$ of a regular matroid $\MM$ can be used to tile its affine span.
		More precisely, a polytope $P \subset \R^{d}$ is said to \defn{tile} its affine span $S$ if there is a polyhedral subdivision of $S$  whose maximal cells are translates of~$P$.
		The next theorem, due to Shepard~\cite{shephard1974space}, tells us that a zonotope tiles its affine span exactly when the underlying matroid is regular.
		\begin{thm}
			\label{theorem:spaceTiling}
			A zonotope $\rZ(M)$ tiles its affine span if and only if the matroid $\MM(M)$ is regular.
		\end{thm}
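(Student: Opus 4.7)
My proof plan: I would prove both directions of the equivalence using Stanley's half-open decomposition (Theorem~\ref{theorem:zonotopeDecomp}) and the characterizations of regular matroids (Theorem~\ref{theorem:regularMatroidCharacterization}) as the main tools.

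For the direction $\MM(M)$ regular $\Rightarrow$ $\rZ(M)$ tiles its affine span, first invoke Theorem~\ref{theorem:regularMatroidCharacterization} to pass to a totally unimodular representative $\rM$ of $\MM(M)$. Since regular matroids have essentially unique real representations up to row operations and column rescalings, $M$ and $\rM$ are related by a linear automorphism composed with column rescalings; each of these preserves whether a zonotope tiles its affine span, so it suffices to show that $\rZ(\rM)$ tiles. Assuming $\rM$ has full row rank $d$ (the general case reduces to this by restricting to $\aff\rZ(\rM)$), Stanley's decomposition gives $\rZ(\rM) = \bigsqcup_B \Pi_B$, and by unimodularity each top-dimensional $\Pi_B$ is a fundamental domain for the lattice $\Z\langle B\rangle = \Z^d$. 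To build the tiling I would identify a sublattice $\Lambda \subseteq \Z^d$ of covolume $|\BB|$, matching $\vol \rZ(\rM) = |\BB|$ from Corollary~\ref{corollary:volumeOfTUzonotope}, such that the $\Lambda$-translates of $\rZ(\rM)$ exhaust $\R^d$. The natural candidate for $\Lambda$ arises from the cocircuit structure of $\MM(\rM)$, bringing Theorem~\ref{thm:cocircuitSpace} into play, and unimodularity is used to verify both coverage and absence of interior overlaps.

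For the converse direction $\rZ(M)$ tiles $\Rightarrow$ $\MM(M)$ regular, take any circuit $C$ of $\MM(M)$ with its unique-up-to-scalar dependency $\sum_{i \in C} \sigma_i M_i = 0$. A tiling of $\aff \rZ(M)$ by translates of $\rZ(M)$ induces a local tiling structure near each $2$-dimensional zone of $\rZ(M)$ associated with $C$, and compatibility of this local structure with the global tiling forces $|\sigma_i| = |\sigma_j|$ for all $i, j \in C$. Applying this balance condition to every circuit and rescaling the columns of $M$ accordingly produces a representation whose circuit dependencies all have $\pm 1$ coefficients, that is, a totally unimodular representation. By Theorem~\ref{theorem:regularMatroidCharacterization}, $\MM(M)$ is then regular.

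The main obstacle is the forward direction. While a volume count gives the right target covolume $|\BB|$ for the tiling sublattice, the explicit construction of $\Lambda$ and the verification that its translates of $\rZ(\rM)$ cover $\R^d$ injectively (up to measure zero) is delicate: local gluings of adjacent parallelepipeds $\Pi_B$ and $\Pi_{B'}$ across their shared faces---one such gluing per basis exchange---must propagate consistently into a single global translation lattice, and total unimodularity is precisely what makes these gluings translation-invariant in a coherent, matroid-wide way.
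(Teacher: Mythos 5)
The paper does not prove this theorem; it cites it to Shephard~\cite{shephard1974space} without supplying an argument, so there is no ``paper's own proof'' to compare against. Evaluating your attempt on its merits, both directions have genuine gaps.

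In the forward direction, you acknowledge the central difficulty yourself: you identify that the tiling lattice $\Lambda$ should come from the cocircuit structure (essentially the content of Theorem~\ref{thm:cocircuitSpace} and Corollary~\ref{corollary:taleOfTwoLattices}), but you do not construct it nor verify that its translates of $\rZ(\rM)$ cover without interior overlap. Naming the right lattice is only the start; the heart of Shephard's theorem is exactly the verification you leave open, and the volume count $\vol\rZ(\rM)=|\BB|=\det\Lambda$ does not by itself force a tiling --- a fundamental-domain-sized set can fail to tile. There is also an unjustified step earlier in the reduction: you assert that column rescalings preserve the property of tiling the affine span when passing from a general real representation $M$ to a totally unimodular one $\rM$. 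Linear automorphisms clearly preserve tilings, but stretching individual zones changes the polytope, and showing that this preserves the tiling property is not immediate --- in fact it is essentially equivalent to the statement being proved, since tiling is characterized by the (rescaling-invariant) oriented matroid, so one risks circularity here without an independent argument.

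The converse direction is also more of a heuristic than a proof. The claim that the local tiling structure near a $2$-dimensional zone ``forces $|\sigma_i|=|\sigma_j|$ for all $i,j\in C$'' would need a careful analysis of how translates of $\rZ(M)$ fit together along codimension-one and codimension-two faces, and it is not made precise why compatibility propagates from those local pictures to a global balance condition on every circuit dependency. Furthermore, the conclusion you draw --- that rescaling columns to make all circuit coefficients $\pm1$ yields a totally unimodular matrix --- is not quite the correct logical endpoint: having unimodular circuit vectors is not literally the same as every minor lying in $\{-1,0,1\}$, and one would still need to invoke (and prove applicable) a characterization such as Theorem~\ref{theorem:regularMatroidCharacterization}(2) via orientability, not (4) directly. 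As it stands, your proposal is a reasonable roadmap that correctly identifies the relevant structures (cocircuit lattice, Stanley decomposition, regularity characterizations), but it does not constitute a proof of either implication.
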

		Note that in the above theorem $M$ is not required to be unimodular but only a representation over $\R$ of a regular matroid.
		This distinction will become important later on when we discuss the space-tiling properties of the zonotope generated by the Laplacian of a connected graph which, though not itself a unimodular matrix, is nevertheless a representation of the regular matroid $\MM(\rN^{\top})$, where $\rN$~is the signed vertex-edge incidence matrix of the graph.

		Any $k$-dimensional zonotope $\rZ \subseteq \R^{n}$ can be viewed as the projection of the unit $n$-cube.
		Moreover, in \cite{mcmullen1984volumes} one finds the following theorem in which the (Euclidean) $d$-dimensional volume of $\rZ$ and the $n-d$-dimensional volume of a certain zonotope $\overline{\rZ}$ in the orthogonal complement of the linear hull of $\rZ$ are shown to be the same.
		\begin{thm}
		\label{theorem:associatedVolumes}
			If $\rZ$ and $\overline{\rZ}$ are images of the unit cube in $\R^{n}$ under orthogonal projection onto orthogonal subspaces of dimension $d$ and $n-d$, respectively, and $\vol_{k}$ denotes the $k$-dimensional Euclidean volume form, then $\vol_{d}(\rZ) = \vol_{n-d}(\overline{\rZ})$.
		\end{thm}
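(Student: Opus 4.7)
My plan is to express both volumes as sums of absolute values of maximal minors of suitable matrices, and then to invoke the classical identity that complementary minors of an orthogonal matrix have equal absolute value.

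First, I would fix orthonormal bases of the two orthogonal subspaces (of dimensions $d$ and $n-d$) and arrange them as the columns of an $n \times d$ matrix $P$ and an $n \times (n-d)$ matrix $Q$, so that $O := [\,P \mid Q\,]$ is an orthogonal $n \times n$ matrix. Using the chosen orthonormal bases to identify each subspace with a coordinate space, the orthogonal projection of the standard basis vector $\e_i$ onto the first subspace is the $i$-th row of $P$, and similarly for $Q$. Consequently, $\rZ$ and $\overline{\rZ}$ agree (up to isometry) with the zonotopes $\rZ(P^\top) \subset \R^d$ and $\rZ(Q^\top) \subset \R^{n-d}$ generated by the rows of $P$ and $Q$, respectively.

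Next, I would apply Stanley's decomposition (Theorem \ref{theorem:zonotopeDecomp}) to write
\[
\vol_d(\rZ) = \sum_{S \in \binom{[n]}{d}} \left| \det P_S \right|
\qquad \text{and} \qquad
\vol_{n-d}(\overline{\rZ}) = \sum_{T \in \binom{[n]}{n-d}} \left| \det Q_T \right|,
\]
where $P_S$ denotes the $d \times d$ submatrix of $P$ with rows indexed by $S$, and $Q_T$ is defined analogously; rows whose index set is dependent contribute a zero determinant and can be absorbed into these sums harmlessly. The key step is then the complementary minor identity: for every $S \in \binom{[n]}{d}$, one has $|\det P_S| = |\det Q_{[n] \setminus S}|$. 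Since the involutive bijection $S \leftrightarrow T := [n] \setminus S$ pairs the two sums term by term, this equality finishes the proof.

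The main obstacle is the complementary minor identity itself. It follows from $O^{-1} = O^\top$ together with $|\det O| = 1$, but in keeping with the paper's stated avoidance of Cauchy-Binet, I would derive it directly. Write $O$ in block form with respect to the row partition $(S, [n] \setminus S)$ and the column partition $([d], [n] \setminus [d])$ as
\[
O = \begin{pmatrix} A & B \\ C & D \end{pmatrix}, \qquad A = P_S,\ B = Q_S,\ C = P_{[n]\setminus S},\ D = Q_{[n]\setminus S},
\]
then use $OO^\top = I$ to obtain the relations $AA^\top + BB^\top = I_d$, $CC^\top + DD^\top = I_{n-d}$, and $AC^\top + BD^\top = 0$. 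In the generic case that $A$ and $D$ are invertible, combining the Schur complement expansion $\det O = \det(A)\det(D - CA^{-1}B)$ with the substitution $B = -AC^\top(D^\top)^{-1}$ (from the third relation) yields $D - CA^{-1}B = (D^\top)^{-1}$, whence $|\det A| = |\det D|$; the degenerate cases follow by continuity or a small perturbation argument. Everything else in the proof is a straightforward unpacking of the definitions of orthogonal projection and zonotope volume.
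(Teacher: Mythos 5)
Your proof is correct. Note, however, that the paper does not prove Theorem~\ref{theorem:associatedVolumes}: it cites McMullen~\cite{mcmullen1984volumes} and uses the statement as a black box (chiefly to derive a dual form of the weighted matrix tree theorem in Section~\ref{section:weightedCase}). So there is no internal proof to compare against; you have supplied a complete, self-contained argument where the paper has none.

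Your route is the standard one underlying McMullen's result: choose orthonormal bases of the two orthogonal subspaces and arrange them as columns of an orthogonal matrix $O = [\,P \mid Q\,]$; identify the projected cubes with $\rZ(P^\top)$ and $\rZ(Q^\top)$; use Stanley's decomposition (the paper's Theorem~\ref{theorem:zonotopeDecomp}) to write each volume as a sum of absolute values of maximal minors of $P$, respectively~$Q$; and finish with the complementary-minor (Jacobi) identity for an orthogonal matrix. Your Schur-complement derivation of that identity from $OO^\top = I$ checks out: $B = -AC^\top(D^\top)^{-1}$ gives $CA^{-1}B = -CC^\top(D^\top)^{-1}$, and $CC^\top = I - DD^\top$ then simplifies $D - CA^{-1}B$ to $(D^\top)^{-1}$, so $|\det O| = |\det A|/|\det D| = 1$. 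This also keeps the paper's advertised avoidance of Cauchy--Binet. One small streamlining: the degenerate case needs no perturbation argument. If $Au = 0$ with $u \neq 0$, then
\[
O\begin{pmatrix}u\\[2pt]0\end{pmatrix} = \begin{pmatrix}0\\[2pt]Cu\end{pmatrix}, \qquad \|Cu\| = \|u\| > 0,
\]
and applying $O^\top$ to both sides yields $D^\top(Cu) = 0$ with $Cu \neq 0$, so $D$ is singular whenever $A$ is, and the identity $|\det A| = |\det D|$ holds with both sides zero.
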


		To conclude this subsection we fulfill our promise from the previous one and give the relationship between covectors of an oriented matroid $\MM(M)$ (for an arbitrary matrix~$M$) and the faces of the zonotope $\cZono{M}$.
		Let $P:=(\mathcal{V}^{*},\prec)$ be the poset of covectors of $\MM(M)$ where $\prec$ is the component-wise extension of $0\prec \pm 1$, and let $\mathcal{F}$ be the poset whose elements are the faces of $\cZono{M}$ ordered by inclusion.
		Then $P$ is anti-isomorphic to $\mathcal{F}$ as witnessed by the order-reversing bijection that sends a covector $\v = (v_{1}, \dots, v_{n})$ to the face
			\[
		   F_{\v} 
		   = \sum_{i: v_{i} = 1} \tfrac{1}{2}M_{i} 
		   - \sum_{i: v_{i} =-1} \tfrac{1}{2}M_{i} 
		   + \sum_{i: v_{i} = 0} S_{i},
		        \]
		where $S_{i} = \conv\left\{-\frac{1}{2}M_{i}, \frac{1}{2}M_{i}\right\}$.
		Note that the facets of $\cZono{M}$ correspond to the cocircuits of the oriented matroid $\MM(M)$.
		 Now consider the barycenters $\pm \beta_{1}, \dots, \pm \beta_{r}$ of the facets $\pm F_{1}, \dots, \pm F_{r}$ of $\cZono{M}$.
		If $\CC^{*}_{i}$ is the cocircuit corresponding to the facet $F_{i}$, then it is clear from the above expression that $\beta_{i} = \frac{1}{2}M\CC^{*}_{i}$. 
		For the formulation of Corollary~\ref{corollary:taleOfTwoLattices} below, it turns out to be more appropriate to work with the \defn{scaled barycenter matrix} $B = B(M)$ whose columns are the~$\beta_i$ scaled by a factor of~$2$.

%%%%%%%%%
%%%%%%%%% 
 \subsection{Lattices}

		Before turning to the proof of the main result in the next section, we review the necessary terminology and results from lattice theory.
		Our notation follows~\cite{vallentin2004note} and all proofs can be found either there or in~\cite{tutte1971matroids}.

		Let $\LL \subset \R^{n}$ be a lattice, i.e., a free discrete subgroup of $\R^{n}$.
		Then $\LL$ gives rise to the oriented matroid $\MM(\LL)$ whose covectors are $\mathcal{V^{*}} = \{\sign(\v) \suchthat \v \in \LL\}.$
		The \defn{support} of a vector $\v \in \LL$ is the set $\supp(\v) = \{i \in [n] \suchthat \v_{i} \neq 0\}$.
		A nonzero vector $\v \in \LL$ is \defn{elementary} if its coordinates lie in $\{-1,0,1\}$ and it has minimal support in $\LL\setminus{\veczero}$.
		Two vectors in $\LL$ are \defn{conformal} if their component-wise product is in $\R^{n}_{\ge 0}$. 

		A \defn{zonotopal lattice} is a pair $(\LL,\left \langle \cdot, \cdot \right \rangle)$ where $\LL \subset \Z^{n}$ is a lattice, $\left \langle \cdot, \cdot \right \rangle$ is an inner product on $\R^{n}$ such that the canonical basis vectors are pairwise orthogonal, and for every $\v \in \LL \setminus \{\veczero\}$ there is an elementary vector $\u \in \LL$ such that $\supp(\u) \subseteq \supp(\v)$.
		The next proposition (Lemma 3.2 in \cite{vallentin2004note}) tells us that zonotopal lattices are generated by the cocircuits of their oriented matroids in an especially nice way.
		\begin{prop}
		\label{prop:zonotopalLattices}
			The elementary vectors of a zonotopal lattice $\LL$ are exactly the cocircuits of the oriented matroid $\MM(\LL)$.
			Moreover, every vector $\v \in \LL$ is the sum of pairwise conformal elementary vectors, and if the support of $\v$ equals the support of some elementary vector $\u$, then $\v$ is a scalar multiple of $\u$.
		\end{prop}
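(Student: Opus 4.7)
The plan is to establish the three assertions of the proposition in sequence: first that each elementary vector's sign vector is a cocircuit; second the conformal decomposition; and third the converse direction (every cocircuit comes from an elementary vector) together with the scalar-multiple rigidity. The first direction is immediate: if $\u \in \LL$ is elementary, then $\sign(\u) = \u$ is a nonzero covector, and any nonzero $\sigma = \sign(\w) \prec \sign(\u)$ would yield $\w \in \LL \setminus \{\veczero\}$ with $\supp(\w) \subsetneq \supp(\u)$, contradicting the minimality of $\supp(\u)$ in $\LL \setminus \{\veczero\}$.

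The technical heart will be the following key claim: for every $\v \in \LL \setminus \{\veczero\}$ there exists an elementary $\u \in \LL$ conformal with $\v$ and satisfying $\supp(\u) \subseteq \supp(\v)$. I would prove this by picking $\bar\w$ of minimum support size in $S_\v := \{\w \in \LL \setminus \{\veczero\} : \w \text{ is conformal with } \v \text{ and } \supp(\w) \subseteq \supp(\v)\}$ (which is nonempty since $\v \in S_\v$), applying the zonotopal hypothesis to $\bar\w$ to obtain an elementary $\u \in \LL$ with $\supp(\u) \subseteq \supp(\bar\w)$, and then showing that $\u$ (after possibly negating) is forced to be conformal with $\v$. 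If $\sign(\u_j) = -\sign(\v_j)$ for every $j \in \supp(\u)$, the vector $-\u$ is the desired conformal elementary vector. Otherwise I pick $j^* \in \supp(\u)$ minimizing $|\bar\w_{j^*}|$ among those indices with $\sign(\u_{j^*}) = \sign(\v_{j^*})$, set $c := |\bar\w_{j^*}| > 0$, and consider $\bar\w' := \bar\w - c\u$. A short case analysis on coordinates partitioned as (i) outside $\supp(\u)$, (ii) inside $\supp(\u)$ with $\sign(\u_j) = \sign(\v_j)$, and (iii) inside $\supp(\u)$ with $\sign(\u_j) = -\sign(\v_j)$ will show that $\bar\w' \in \LL$ is still conformal with $\v$ while $\bar\w'_{j^*} = 0$, so $|\supp(\bar\w')| < |\supp(\bar\w)|$. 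Minimality of $\bar\w$ in $S_\v$ then forces $\bar\w' = \veczero$, i.e., $\bar\w = c\u$, which in turn implies $\u$ itself is conformal with $\v$ on $\supp(\u) = \supp(\bar\w) \subseteq \supp(\v)$.

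With the key claim established, the conformal decomposition will follow by induction on $|\supp(\v)|$: taking $\u$ as in the claim and $\alpha := \min\{|\v_j| : j \in \supp(\u)\} \in \Z_{>0}$, the vector $\v' := \v - \alpha\u$ has strictly smaller support than $\v$ and remains conformal with both $\v$ and $\u$; by induction $\v' = \sum_k \w_k$ with pairwise conformal elementary $\w_k$, and pairwise conformality (which forbids sign cancellation) forces each $\w_k$ to also be conformal with $\u$, so $\v = \alpha\u + \sum_k \w_k$ is the desired decomposition. The converse of the first assertion then follows: for a cocircuit $\sigma$, pick any $\v \in \LL$ with $\sign(\v) = \sigma$ and decompose $\v = \sum_k \w_k$; each $\sign(\w_k)$ is a nonzero covector with $\sign(\w_k) \preceq \sigma$, and the minimality of $\sigma$ forces $\sign(\w_k) = \sigma$. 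Finally, if $\supp(\v) = \supp(\u)$ with $\u$ elementary, every summand $\w_k$ in a conformal decomposition of $\v$ satisfies $\supp(\w_k) \subseteq \supp(\u)$ with equality (by minimality of $\supp(\u)$), and since all $\w_k$ are $\{-1,0,1\}$-valued with the same support and sign pattern matching $\v$, they must all coincide with a single signed copy of $\u$, making $\v$ a scalar multiple of $\u$.

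The main obstacle will be the key claim, and in particular the sign analysis showing that $\bar\w'$ remains conformal with $\v$ after subtracting $c\u$: this relies on the carefully chosen $c$ as the \emph{minimum} of $|\bar\w_j|$ over the ``signs-agree'' indices, so that on the ``signs-oppose'' indices the subtraction $\bar\w_j - c\u_j$ reinforces rather than flips the sign of $\bar\w_j$.
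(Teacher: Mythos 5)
The paper does not prove this proposition; it is imported as Lemma~3.2 of Vallentin~\cite{vallentin2004note}, so there is no in-paper argument to compare against. Your blind proof is correct in its essentials and self-contained. The key claim (for any $\v\neq\veczero$ in $\LL$ there is an elementary $\u$ conformal with $\v$ and supported inside $\supp\v$), proved via the minimum-support auxiliary $\bar\w\in S_\v$ and the tailored subtraction $\bar\w-c\u$, is exactly the right device, and the three-way sign analysis for why $\bar\w-c\u$ stays conformal with $\v$ while killing the $j^*$ coordinate is sound. The inductive conformal decomposition, the identification of elementary vectors with cocircuits, and the converse via decomposing a lattice vector that realizes a given cocircuit all go through (with the small observation, which you invoke implicitly, that pairwise conformality forbids cancellation and hence forces $\supp\w_k\subseteq\supp\v'$ and sign agreement on that support, which is what lets conformality with $\u$ propagate to each $\w_k$).

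One step does need a patch. In the final rigidity argument you conclude that the common elementary summand $\w$ appearing in the decomposition of $\v$ ``must coincide with a single signed copy of $\u$,'' but that does not follow merely from $\w$ and $\u$ both being elementary with $\supp\w=\supp\u$: a priori they could have unrelated sign patterns on that support. You need the extra observation that two elementary vectors with the same support are equal up to sign. Indeed, if $\w\neq\pm\u$, then on their common support there is an index where $\w$ and $\u$ agree and another where they disagree, so $\w-\u$ is a nonzero vector of $\LL$ whose support is a proper subset of $\supp\u$, contradicting the minimality of $\supp\u$. With this one-line addition the argument is complete.
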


		As noted in Remark 4.2 of \cite{vallentin2004note}, the oriented matroid of a zonotopal lattice is regular. 
		Historically this was taken as the definition of a regular matroid (see Section 1.2 of \cite{tutte1971matroids}).		
		We reestablish this connection and give a modern proof for the fact that, for a regular oriented matroid $\MM(\rM)$ with cocircuits~$\mathcal{C}^{*}$ and $\rM$ unimodular, the lattices generated by $\rM^{\top}$ and $\mathcal{C}^{*}$ coincide (see Theorem \ref{thm:cocircuitSpace}).

%%%%%%%%%%%%%%%%%%%%%
%%%%%%%%%%%%%%%%%%%%%

%%%%%%%%%%%%%%%%%%%%%
%%%%%%%%%%%%%%%%%%%%%

\section{Proof of the Polyhedral Matroid Matrix Tree Theorem}
\label{section:matroidProof}

		Let $\MM$ be a regular rank $d$ matroid.
		If $\rM$ is a unimodular representation of $\MM$, then by Corollary \ref{corollary:volumeOfTUzonotope} the volume of the zonotope generated by $\rM$ is equal to the number of bases of $\MM$, $\vol(\rZ(\rM)) = |\BB(\MM)|$.
		When $\rM$ has full-row rank then so does the square matrix~$\rL$, and so the zonotope $\rZ(\rL)$ is a parallelepiped with volume $\det (\rL) = \lambda_{1}\cdots\lambda_{d}$, where the $\lambda_{i}$ are the eigenvalues of $\rL$.
		Using row operations that preserve unimodularity and then deleting any rows of zeros, any unimodular represention $\rM$ of $\MM$ can be transformed into a full row-rank unimodular representation of $\MM$, so without loss of generality we may assume $\rM$ is a full rank unimodular representation of $\MM$. 
		The proof of Theorem \ref{theorem:polytopalMatroidMTT} will be complete once we show that the zonotopes $\rZ(\rM)$ and $\rZ(\rL)$ have the same volume.

		\begin{rmk}
		When $\rM$ has nontrivial corank (as is the case, for example, when $\rM = \rN(G)$ is the signed incidence matrix of a graph), the zonotope $\rZ(\rL)$ is no longer a parallelepiped.
		This means that some care must be taken when showing that the volume of $\rZ(\rL)$ is the product of its nonzero eigenvalues.
		We sweep this detail under the rug in this section for ease of exposition, dealing with it in detail in the next section	where we use our techniques to prove the graphical matrix tree theorem.
		\end{rmk}

		Our first goal is to see that, when $\rM$ is a unimodular representation of a regular matroid, the lattices generated by $\rL$ and the scaled barycenter matrix $B$ coincide.
		(Note that we do not require $\rM$ to have full rank nor to be totally unimodular.)
		This fact is an immediate corollary of the following theorem.  
				\begin{thm}
				\label{thm:cocircuitSpace}
					Let $\MM$ be a regular oriented matroid on $n$ elements and $\rM$ be a unimodular matrix representing $\MM$ over $\R$. Then the lattices ${_{\Z}}\!\left \langle \rM^{\top} \right \rangle$ and ${_{\Z}}\!\left \langle \CC^* \right \rangle$, generated by the columns of~$\rM^{\top}$ and by the cocircuits of $\MM$, respectively, coincide.
				\end{thm}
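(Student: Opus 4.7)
The plan is to prove the two set-theoretic inclusions of lattices separately. A preliminary reduction via integer unimodular row operations and deletion of zero rows, which preserves both the integer row span and the oriented matroid, lets me assume $\rM$ has full row rank~$d$.

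For the inclusion ${_\Z}\!\left\langle \CC^* \right\rangle \subseteq {_\Z}\!\left\langle \rM^\top \right\rangle$ my main tool is Cramer's rule. Fix a cocircuit $C^*$ of $\MM$ and set $S := \supp(C^*)$ and $T := [n] \setminus S$. By the correspondence between cocircuits and codimension-one flats, the columns $\{\rM_i : i \in T\}$ span a hyperplane of $\R^d$. Choose a basis $B' \subseteq T$ of that hyperplane and any $j \in S$, and form the basis $B := B' \cup \{j\}$ of~$\MM$. Because $\rM_B$ is a $d \times d$ unimodular matrix, $\det \rM_B = \pm 1$, so its inverse has integer entries, and the unique $w \in \Z^d$ satisfying $w^\top \rM_i = 0$ for $i \in B'$ and $w^\top \rM_j = 1$ is integer. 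For every other column $\rM_k$, Cramer's rule writes $w^\top \rM_k$ as a ratio of two maximal minors of $\rM$, placing $w^\top \rM_k$ in $\{-1, 0, 1\}$; and $w^\top \rM_k = 0$ exactly when $\rM_k$ lies in the hyperplane spanned by $\{\rM_i : i \in T\}$, i.e.\ exactly when $k \in T$. Therefore $\rM^\top w \in \{-1, 0, 1\}^n$ with support exactly $S$, so $\rM^\top w = \pm C^*$. Since $\rM^\top w$ lies in the integer row lattice and that lattice is closed under negation, $C^* \in {_\Z}\!\left\langle \rM^\top \right\rangle$.

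For the reverse inclusion I would invoke Proposition~\ref{prop:zonotopalLattices}. Set $\LL := {_\Z}\!\left\langle \rM^\top \right\rangle$, equipped with the standard inner product on $\R^n$. The nontrivial axiom to verify for $\LL$ being a zonotopal lattice is that every nonzero $v \in \LL$ dominates an elementary vector of $\LL$. Given such $v$, the sign vector $\sign(v)$ is a nonzero covector of $\MM(\rM)$, so it contains in its support some cocircuit $C^*$ of $\MM$. By the preceding paragraph $C^* \in \LL$; it has entries in $\{-1, 0, 1\}$; and its support is minimal in $\LL \setminus \{\veczero\}$, because any lattice vector with strictly smaller support would produce a covector of $\MM(\rM)$ with strictly smaller support than the cocircuit. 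Thus $C^*$ is elementary in $\LL$ and $\supp(C^*) \subseteq \supp(v)$, so $\LL$ is zonotopal. Proposition~\ref{prop:zonotopalLattices} then identifies the elementary vectors of $\LL$ with the cocircuits of $\MM(\LL) = \MM(\rM)$ and writes every $v \in \LL$ as a conformal integer combination of such cocircuits, yielding $\LL \subseteq {_\Z}\!\left\langle \CC^* \right\rangle$.

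The main obstacle is the first inclusion: this is where unimodularity of $\rM$ (as opposed merely to $\R$-representability of the matroid) is used in an essential way, and one must carefully verify that the support of $\rM^\top w$ is \emph{exactly} $S$ rather than a proper subset. The reverse inclusion, by contrast, is essentially bookkeeping once the zonotopal-lattice framework is in place; the only subtle point there is the routine identification of $\MM(\LL)$ with $\MM(\rM)$.
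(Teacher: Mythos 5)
Your proof is correct, and while the reverse inclusion mirrors the paper's argument (identify $\LL := {_{\Z}}\!\left\langle \rM^\top \right\rangle$ as a zonotopal lattice, note its elementary vectors are the cocircuits, and invoke Proposition~\ref{prop:zonotopalLattices}), your forward inclusion takes a genuinely different route. The paper works with the subspace arrangement $\mathscr{H}(\rM)$, picks a ray $\rho$ labeled by a cocircuit $\sigma$, shows that $\rho \cap [-1,1]^n$ can be described by a unimodular system with $\{0,\pm 1\}$ right-hand sides by choosing a unimodular basis of $\ker \rM$, and then appeals to Schrijver's Theorem~19.2 on integral polyhedra to conclude $\rho \cap [-1,1]^n = \conv\{\veczero, \sigma\}$, hence $\sigma \in \LL$. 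You instead fix a cocircuit, complete a basis $B'$ of the complementary hyperplane-flat to a basis $B$ of the matroid, and observe that the unique $w$ dual to the chosen column is integral because $\det \rM_B = \pm 1$; Cramer's rule then bounds each entry of $\rM^\top w$ by a ratio of maximal minors, giving a $\{-1,0,1\}$ vector whose support is exactly the cocircuit. Your argument is more elementary and self-contained, replacing integral-polyhedron machinery with direct linear algebra, and it makes the role of unimodularity particularly transparent (it is used exactly once, to invert $\rM_B$ over $\Z$ and to control the Cramer numerators); the paper's argument, by contrast, stays within the polyhedral-arrangement framework that it develops and exploits elsewhere, at the cost of importing Schrijver's integrality theorem. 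One small point worth making explicit in your write-up is that the identification $\MM(\LL) = \MM(\rM)$ at the end holds because each relatively open cell of the coordinate-hyperplane arrangement restricted to the rational rowspace of $\rM$ is a nonempty rational cone and hence contains a lattice point of $\LL$ after scaling; you flag this as routine, which it is, but a reader will want the one-line justification.
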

				\begin{proof}			

					Recall that the subspace arrangement $\mathscr{H}=\mathscr{H}(\rM) \subset \R^{n}$ is obtained by intersecting the rowspace of $\rM$ with the coordinate hyperplane arrangement in $\R^{n}$.
					Clearly, the closure of any cell of $\mathscr{H}$ is the positive hull of the rays of $\mathscr{H}$ it contains and the sign vector of a cell is conformal to each of the rays contained in its closure.
					By the discussion in Section~\ref{subsection:Matroids},  the cocircuits of $\MM(\rM)$ are the sign vectors that label the rays of this arrangement. 
					Let $\rho$ be such a ray, labeled with the sign vector $\sigma$.
					We claim $\rho = \posHull(\sigma)$.

					Consider the polytope $\rho \cap [-1,1]^{n}$.
					The equations for the rowspace of $\rM$ are given by the kernel of~$\rM$ and it follows from Theorem \ref{theorem:regularMatroidCharacterization} that one can find a unimodular basis for $\ker \rM$ (see \cite{beck2014enumerating} Lemma 2.10 for details when $\rM$ is a full-rank totally unimodular matrix).
					 Thus the line segment $\rho \cap [-1,1]^{n}$ is the intersection of hyperplanes and halfspaces whose normal vectors can be viewed as the rows of a unimodular matrix.
					 Moreover, the equations and inequalities of the segment all have integer (in fact $\{0,\pm 1\}$) right-hand sides.
					 Thus, by Theorem 19.2 in \cite{schrijver1998theory}, we obtain that $\rho \cap [-1,1]^{n} = \conv\{\veczero, \v\}$ is a lattice segment with $\v \in \{-1,0,1\}^{n}$ .
					But then $\v = \sign(\v) = \sigma$, and so $\rho = \posHull(\sigma)$.
					In particular, every cocircuit $C^{*}$ of $\MM(\rM)$ is in the rowspace of $\rM$, and hence ${_{\Z}}\!\left \langle \CC^* \right \rangle \subseteq {_{\Z}}\!\left \langle \rM \right \rangle$.

					For the opposite inclusion, let $\w \in {_{\Z}}\!\left \langle \rM \right \rangle$ with sign vector $\sigma_{\w}$.
					Then, as the cell of~$\mathscr{H}$ labelled with $\sigma_{\w}$ is the positive hull of the rays it contains and the labels on these rays have minimal support, for any such ray $\rho$ we have $\supp(\rho) \subseteq \supp(\sigma_{\w})$.
					It follows immediately that ${_{\Z}}\!\left \langle \rM \right \rangle$ together with the standard inner product on $\R^{n}$ is a zonotopal lattice. 
					Moreover, the elementary vectors of ${_{\Z}}\!\left \langle \rM \right \rangle$ are those $\{-1,0,1\}$-vectors in the rowspace of $\rM$ that have minimal support, i.e., lie on a ray of the arrangement induced by the coordinate hyperplane arrangement.
					It follows that the elementary vectors of ${_{\Z}}\!\left \langle \rM \right \rangle$ are the cocircuits of $\MM(\rM)$ and, since elementary vectors of a zonotopal lattice span the lattice by Proposition~\ref{prop:zonotopalLattices}, the theorem follows. 
				\end{proof}

		                \begin{rmk}\label{rem:covectors}
		                  As we already mentioned, the covectors of $\MM$ do not always lie in the rowspace of $\rM$. Consider the totally unimodular matrix 
		\[
		\rN_{K_3}
		\ = \
		\begin{pmatrix}
		  -1 & -1 &  0\\
		  1  &  0 & -1\\
		  0  &  1 &  1
		\end{pmatrix}.
		\]
		By Theorem~\ref{thm:cocircuitSpace}, the rowspace of $\rN_{K_3}$ has a basis of cocircuits, for example 
		\[
		\CC^*
		\ = \
		\begin{pmatrix}
		   1 & 1 & 0 \\
		   0 & 1 & 1
		\end{pmatrix}.
		\]
		The lattice point $(1,2,1)$ lies in the rowspace of $\CC^*$. However, taking signs yields the covector $(1,1,1)$ of $\MM$, which does not lie in the rowspace of~$\CC^*$.\qed
		                \end{rmk}

		Recall that for an arbitrary matrix $M$, the columns of the scaled barycenter matrix $B=B(M)$ are the barycenters $\beta_i=\frac{1}{2}M\CC^{*}_{i}$ of $\cZono{M}$, scaled by~$2$.

			\begin{cor}
			\label{corollary:taleOfTwoLattices}
				Let $\MM$ be a regular oriented matroid on $n$ elements and $\rM$ be a unimodular matrix representing $\MM$ over $\R$.
				Then the lattices generated by the columns of $\rL$ and the columns of~$B$ are equal.
			\end{cor}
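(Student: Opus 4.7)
The plan is to reduce the corollary to Theorem~\ref{thm:cocircuitSpace} by observing that both lattices in question are images of the two lattices compared in that theorem under the same linear map, namely left multiplication by~$\rM$.

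First I would unpack what the columns of the two matrices look like. The $j$-th column of $\rL=\rM\rM^{\top}$ is $\rM$ applied to the $j$-th column of $\rM^{\top}$, so the set of columns of $\rL$ is exactly $\rM\cdot(\text{columns of }\rM^{\top})$, and therefore ${_{\Z}}\!\left\langle \rL\right\rangle = \rM\cdot{_{\Z}}\!\left\langle \rM^{\top}\right\rangle$. On the other side, by the definition of the scaled barycenter matrix recalled just before the corollary, the columns of $B$ are the vectors $\rM\CC^{*}_{i}$ as $\CC^{*}_{i}$ ranges over the cocircuits of $\MM$, so ${_{\Z}}\!\left\langle B\right\rangle = \rM\cdot{_{\Z}}\!\left\langle \CC^{*}\right\rangle$.

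Next I would apply Theorem~\ref{thm:cocircuitSpace}, which asserts exactly that ${_{\Z}}\!\left\langle \rM^{\top}\right\rangle = {_{\Z}}\!\left\langle \CC^{*}\right\rangle$ as sublattices of $\R^{n}$. Since the map $x\mapsto \rM x$ is $\Z$-linear, it sends equal sublattices of $\R^{n}$ to equal subgroups of $\R^{d}$. Applying this map to the equality just mentioned yields
\[
{_{\Z}}\!\left\langle \rL\right\rangle
\ =\ \rM\cdot{_{\Z}}\!\left\langle \rM^{\top}\right\rangle
\ =\ \rM\cdot{_{\Z}}\!\left\langle \CC^{*}\right\rangle
\ =\ {_{\Z}}\!\left\langle B\right\rangle,
\]
which is the desired equality of lattices.

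There is essentially no obstacle once Theorem~\ref{thm:cocircuitSpace} is in hand; the only thing to be careful about is that the resulting $\Z$-modules really are lattices (free discrete subgroups) and not merely subgroups, but this is automatic because they sit inside the finitely generated free abelian group $\Z^{d}\cap\im\rM$. Thus the corollary is simply the functorial image under $\rM$ of the key identification of the cocircuit lattice with the row lattice, and no further work is required.
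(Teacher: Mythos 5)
Your argument is correct and is exactly the paper's proof, just spelled out in more detail: both observe that $\rL=\rM\rM^\top$ and $B=\rM\CC^*$ are the images under left-multiplication by $\rM$ of the two lattices that Theorem~\ref{thm:cocircuitSpace} identifies. Nothing to add.
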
 
			\begin{proof}		
				Theorem \ref{thm:cocircuitSpace} tells us that the lattices generated respectively by $\rM^{\top}$ and $\CC^{*}$ coincide, and therefore so do their images $\rL=\rM\rM^\top$ and $B=\rM\CC^*$ under $\rM$.
			\end{proof}

		 We now use the fact that the columns of $\rL$ are a basis for $\R^{d}$ to define a subdivision of $\cZono{\rM}$.
		For each sign vector $\epsilon \in \{+,-\}^{d}$ we define the following objects:
			\begin{itemize}
				\item the simplicial cone $\sigma_{\epsilon} := \posHull\{\epsilon_{i}\rL_{i} \suchthat i \in [d]\}$ (see Figure \ref{fig:ZNtoZLsub1});
				\item the vector $v_{\epsilon^-} := \sum_{i:\,  \epsilon_{i} = -} \rL_{i}$;
				\item the polytope $P_{\epsilon} := \sigma_{\epsilon} \cap \cZono{\rM}$ (see Figure \ref{fig:ZNtoZLsub2});
				\item  the polytope $Q_{\epsilon} := P_{\epsilon} + v_{\epsilon^-}$ (see Figure \ref{fig:ZNtoZLsub3}).
			\end{itemize}

			\begin{figure}[htbp]
			\centering	
				% \begin{subfigure}{.3\textwidth}
				% 	\centering
				% 	\includegraphics[width=.6\linewidth]{znToZL/g10708.png}
				% \end{subfigure}
				\begin{subfigure}{.3\textwidth}
				  \centering
				  \includegraphics[width=.9\linewidth]{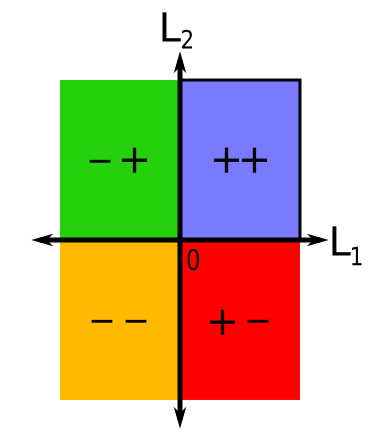}
				  \caption{The cones $\sigma_{\epsilon}$}
				  \label{fig:ZNtoZLsub1}
				\end{subfigure}%
				\begin{subfigure}{.3\textwidth}
				  \centering
				  \includegraphics[width=.9\linewidth]{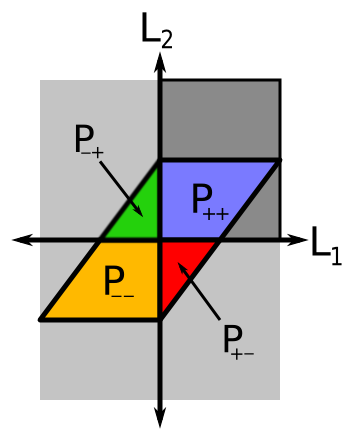}
				  \caption{$\cZono{\rM} = \bigcup P_{\epsilon}$}
				  \label{fig:ZNtoZLsub2}
				\end{subfigure}
				\begin{subfigure}{.3\textwidth}
				  \centering
				  \includegraphics[width=.9\linewidth]{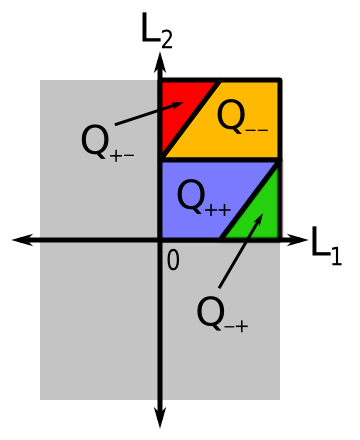}
				  \caption{$\rZ(\rL) = \bigcup Q_{\epsilon}$ }
				  \label{fig:ZNtoZLsub3}
				\end{subfigure}
						\caption{The polyhedra induced by sign vectors for the path on three vertices, after a change of coordinates that transforms the columns of $\rL$ to the standard basis.}
			\label{fig:ZNtoZL}
			\end{figure}

		\begin{example2*}
			Consider the path on three vertices with edges oriented so that $i \rightarrow j$ if $i<j$.
		A full-rank representation for the independent set matroid $\MM(\rN)$ of the signed incidence matrix of this graph is given by the matrix on the left below, while the corresponding Laplacian is the matrix on the right:
			\[\rM = \begin{pmatrix}
				-1 & 0 \\
				1 & -1 		
			\end{pmatrix}
			\hspace{44pt}
			\rL = \begin{pmatrix}
				1 & -1 \\
				-1 & 2 		
			\end{pmatrix}.\]
			Both of the zonotopes $\rZ(\rM)$ and $\rZ(\rL)$ are two dimensional parallelepipeds and Figure \ref{fig:ZNtoZL} illustrates the families $\sigma_{\epsilon}, P_{\epsilon}$, and $Q_{\epsilon}$ as $\epsilon$ varies over all sign vectors for this example after a suitable coordinate transformation. 
			Note that the zonotope of the Laplacian is the parallelepiped in the positive quadrant shaded dark grey. \qed
		\end{example2*}

		Clearly the union of the $P_{\epsilon}$ over all sign vectors is the zonotope $\cZono{\rM}$ and the intersection of any two of them is a face of both. 
		We now prove Theorem \ref{theorem:polytopalMatroidMTT} by showing that the union of the $Q_{\epsilon}$ is in fact $\rZ(\rL)$ and that any two $Q_\epsilon$ intersect in a set of measure~zero.

			\begin{reptheorem}{theorem:polytopalMatroidMTT}
				Let $\MM$ be a regular oriented matroid on $n$ elements, let $\rM$ be a unimodular matrix representing $\MM$ over $\R$, and put $\rL=\rM\rM^\top$.
				Then the volume of the zonotope $\rZ(\rL)$ equals the volume of the zonotope $\rZ(\rM)$.	
			\end{reptheorem}
			\begin{proof}
				By Corollary \ref{corollary:taleOfTwoLattices}, the line segment $\conv\{\veczero, \rL_{i}\}$ intersects some proper face of $\cZono{\rM}$ and the point of intersection is the barycenter of both.
				In particular, the distance between any two points of~$\cZono{\rM}$ in the direction parallel to $\rL_{i}$ is less than or equal to~$||\rL_{i}||$, with equality if and only if the points lie in opposite faces of $\cZono{\rM}$ intersected by the line ${}_{\R}\langle\rL_{i} \rangle$. 

				First we show that $\bigcup_{\epsilon}Q_{\epsilon} \subseteq \rZ(\rL)$.

				Let $H_{1} = \langle \rL_{i} \suchthat i \in \{2,\dots,d\} \rangle_{\R}$ be the hyperplane spanned by all columns of $\rL$ except for $\rL_{1}$ and let $H^{+}_{1}$ be the open halfspace bounded by $H_{1}$ and containing $\rL_{1}$. 
				For  $p \in \cZono{\rM}$ define $\LL_{1,p} := p + \langle \rL_{1}\rangle$ to be the line through $p$ parallel to $\rL_{1}$ and let $q_{1} = \LL_{1,p} \cap H_{1}$ (see Figure \ref{fig:proofPix1}).

			\begin{figure}[htbp]
			\centering	
		        \includegraphics[width=.4\linewidth]{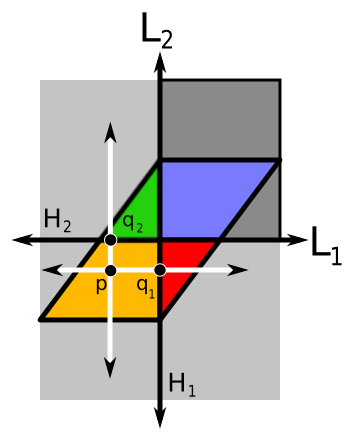}
		        \caption{A point $p \in \cZono{\rM}\cap \sigma_{(-,-)}$, the hyperplanes $H_{i}$ (black), and the lines           $\mathcal{L}_{i,p}$ (white)}
		        \label{fig:proofPix1}
		        \end{figure}
		  
				Since the width of $\cZono{\rM}$ parallel to $\rL_{i}$ is at most $||\rL_{i}||$, it follows that $p = \sum \alpha_{i}\rL_{i}$ for some unique set of $\alpha_{i}$ with $|\alpha_{i}| \le 1$.
				For example, when $i = 1$ we have $||p-q_{1}|| \le ||\rL_{1}||$ and,
				as $p-q_{1}$ is parallel to $\rL_{1}$ by construction, it follows that $p-q_{1} = \alpha_{1}\rL_{1}$ where 
				\[\alpha_{1} := \pm\frac{||p-q_{1}||}{||\rL_{1}||}\]
				is positive (respectively negative, zero) if and only if $p \in H^{+}_{1}$ (respectively $p \in H_{1}^{-}$, $p \in H_{1}$).
				
				Given $p = \sum \alpha_{i}\rL_{i}$, define the sign vector $\epsilon$ by 
					\[\epsilon_{i} = \begin{cases}
						\sign(\alpha_{i}) &\text{if $\alpha_{i} \neq 0$}\\
						+	&\text{else}.
					\end{cases}\]
					Then each $\delta_k$ in the expression
					\[p + v_{\epsilon} 
		                        = \sum_{i \in [d]} \alpha_{i}\rL_{i} 
		                        + \sum_{j:\,\epsilon_{j} = -}\rL_{j}
		                        = \sum_k \delta_k \rL_k
		                        \]
					is in $[0,1]$ and it follows that $Q_{\epsilon} \subseteq \rZ(\rL)$ (see Figure \ref{fig:proofPix2}).

			\begin{figure}[htbp]
				  \centering
			  \includegraphics[width=.4\linewidth]{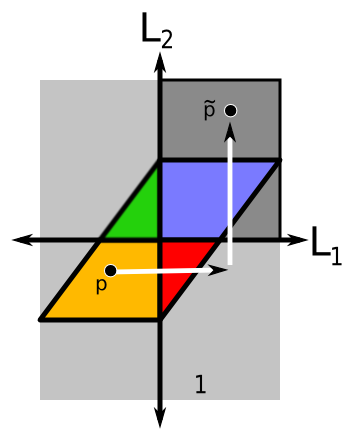}
				  \caption{The point $p$ and its shift $\tilde{p} = p + \rL_{1} + \rL_{2}$ into $\rZ(\rL)$.}
				  \label{fig:proofPix2}
			\end{figure}

				Now we prove $\rZ(\rL) \subseteq \bigcup_{\epsilon}Q_{\epsilon}$.
				Let $q = \sum_{i \in [d]} \gamma_{i}\rL_{i} \in \rZ(\rL)$, so that $\gamma_{i} \in [0,1]$ for all $i$ by definition.
				 Since facet-to-facet shifts of $\cZono{\rM}$ tile the column space of $\rM$, the point~$q$ lies in some translate of~$\cZono{\rM}$. 
				 Since to pass from one tile to a neighboring one through a facet is to add some vector $w$ in ${}_{\Z}\langle B \rangle = {}_{\Z}\langle \rL \rangle$ (Corollary~\ref{corollary:taleOfTwoLattices}), we have  $q \in \cZono{\rM} + \sum_{i \in [d]} a_{i}\rL_{i}$,
				where the $a_{i} \in \Z$, so that
					\[q = \sum_{i\in[d]} \alpha_{i}\rL_{i} + \sum_{i\in[d]} a_{i}\rL_{i}\]
					with $\alpha_{i} \in [-1,1]$. 
		Moreover, all $a_{i}\ge 0$ because $q$~lies in the positive hull of the $L_i$'s.
				Comparing coefficients in the two expressions for $q$ and using that the $\rL_i$ form a basis yields $\alpha_{i} + a_{i} = \gamma_{i}.$ 
				Since $a_{i}$ is a nonnegative integer and $\gamma_i\in [0,1]$, we have $a_{i} \in \{0,1\}$ (notice that the degenerate case $\alpha_i=-1$ and $\gamma_i=1$, in which case $a_i$~would equal~$2$, cannot occur), and 
					 \[a_{i}= 	\begin{cases}
					 				1 \text { if } \alpha_{i} \in [-1,0) \text{ and }\\
					 				0 \text{ if } \alpha_{i} \in (0,1].
					 			\end{cases} \]
				Let $\epsilon$ be the sign vector defined by $\epsilon_{i} = -$ (respectively $+$) if $a_{i} = 1$ (respectively,~$0$).
				Then $q \in Q_{\epsilon}$ and hence $\rZ(\rL) \subseteq \bigcup_{\epsilon} Q_{\epsilon}$.

				Finally, we show that for any two sign vectors $\epsilon, \epsilon'$ the intersection of the relative interiors of $Q_{\epsilon}$ and $Q_{\epsilon'}$ is empty. 
				Let $\phi :\bigcup(\Relint P_{\epsilon}) \to \bigcup(\Relint Q_{\epsilon})$ be the map that sends $\Relint P_{\epsilon} \to \Relint Q_{\epsilon}$.
				There are two points $p \neq p' \in \cZono{\rM}$ with $\phi(p) = \phi(p')=:q$ if and only if
					\[q \in (\cZono{\rM} + v_{\epsilon})\cap (\cZono{\rM} + v_{\epsilon'})\]
					for two sign vectors $\epsilon$ and $\epsilon'$.
					So $q$ lies on the boundary of both translates of $\cZono{\rM}$.
					But then $p$ and $p'$ both lie on the boundary of $\cZono{\rM}$ which contradicts the fact that they were in the relative interior of their respective cells. 
				Thus $\phi$ is a bijective map onto 
					\[\rZ(\rL) \setminus \left(\partial\rZ(\rL) \cup \bigcup_{\epsilon}\partial Q_{\epsilon}\right).\]
				So we have produced a volume-preserving bijection between $\rZ(\rL)$ and $\rZ(\rM)$ (up to a set of measure zero), which completes the proof.
			\end{proof}

			Note that all our proofs in this section go through in the case that the regular matroid~$\MM$ has loops or parallel elements by taking a unimodular representation $\rM$ of $\MM$ where the columns corresponding to loops are columns consisting only of zeros and the columns corresponding to parallel elements are all equal.
			Moreover, the zonotope $\rZ(\rM)$ is equal to the zonotope generated by the matrix $\rM'$ whose columns are the distinct nonzero columns of $\rM$ scaled by their multiplicity.
			This shows that, after an appropriate modification to the definition of $\rL$, Theorem \ref{theorem:polytopalMatroidMTT} is valid even after scaling the columns of the unimodular matrix $\rM$ by integers.
			\begin{cor}
			\label{cor:scaledPMMTT}
				Let $\MM$ be a regular matroid on $n$ elements represented by the unimodular matrix $\rM$ of full row rank, $D$ be a a $n \times n$ diagonal matrix with integer entries, and $\rM'=\rM D$.
				Then the volume of the zonotope $\rZ(\rM')$ equals the volume of $\rZ(\rL')$, where $\rL' = \rM D \rM^{\top}$.
			\end{cor}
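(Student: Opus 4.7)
The plan is to reduce Corollary~\ref{cor:scaledPMMTT} to Theorem~\ref{theorem:polytopalMatroidMTT} by reinterpreting an integer scaling of the column $\rM_i$ as $d_i$ parallel copies of $\rM_i$ in an enlarged unimodular matrix, precisely in the spirit of the paragraph preceding the corollary (where parallel elements of $\MM$ become integer multiplicities on a single column). I will read the hypothesis as saying that $D$ has nonnegative integer diagonal entries $d_{1},\dots,d_{n}$; this appears to be the intended reading, since for negative $d_i$ the matrix $\rL'=\rM D\rM^{\top}$ need not be positive semidefinite and simple examples (e.g.\ $\rM$ representing $U_{2,3}$ with $D=\diag(1,1,-1)$) show that $\vol(\rZ(\rM'))$ and $\vol(\rZ(\rL'))$ can differ.

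The key construction is the $d\times N$ matrix $\tilde\rM$, with $N=\sum_{i}d_{i}$, obtained from $\rM$ by deleting every column with $d_{i}=0$ and replacing each remaining column $\rM_{i}$ by $d_{i}$ copies of itself. By the observation in Subsection~\ref{subsection:matrices} that appending copies of columns (or zero columns) preserves unimodularity, $\tilde\rM$ is a unimodular matrix, and it has full row rank provided $\rM D$ does (otherwise both zonotopes are flat and the conclusion holds trivially). It then suffices to establish two clean identities. First, since $\conv\{\veczero,d_{i}\rM_{i}\}$ equals the Minkowski sum of $d_{i}$ copies of $\conv\{\veczero,\rM_{i}\}$, distributing the Minkowski sum over the enlarged column set yields $\rZ(\rM')=\rZ(\tilde\rM)$. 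Second, writing the matrix product $\tilde\rM\tilde\rM^{\top}$ as a sum of rank-one outer products indexed by the columns of $\tilde\rM$ gives
\[
\tilde\rM\tilde\rM^{\top}
\ =\ \sum_{i=1}^{n} d_{i}\,\rM_{i}\rM_{i}^{\top}
\ =\ \rM D\rM^{\top}
\ =\ \rL'.
\]

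Combining these two identities with Theorem~\ref{theorem:polytopalMatroidMTT} applied to~$\tilde\rM$ yields
\[
\vol(\rZ(\rM'))
\ =\ \vol(\rZ(\tilde\rM))
\ =\ \vol(\rZ(\tilde\rM\tilde\rM^{\top}))
\ =\ \vol(\rZ(\rL')).
\]
There is no real obstacle once one allows the passage to $\tilde\rM$: all of the genuine polyhedral content sits in Theorem~\ref{theorem:polytopalMatroidMTT}, and everything here is bookkeeping. The one conceptual point to flag is simply that positive-integer scaling of a segment in a Minkowski sum is the same as repeating the segment, which is precisely what makes the rank-one outer-product expansion of $\tilde\rM\tilde\rM^{\top}$ collapse to~$\rM D\rM^{\top}$.
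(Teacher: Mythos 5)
Your proof is correct and follows exactly the reduction the paper sketches in the paragraph immediately preceding the corollary: replace each column $\rM_i$ by $d_i$ parallel copies to form a unimodular matrix $\tilde\rM$, observe $\rZ(\rM') = \rZ(\tilde\rM)$, and apply Theorem~\ref{theorem:polytopalMatroidMTT} to $\tilde\rM$. The rank-one expansion $\tilde\rM\tilde\rM^{\top} = \sum_i d_i\rM_i\rM_i^{\top} = \rM D\rM^{\top}$ is exactly what the paper means by ``an appropriate modification to the definition of $\rL$,'' and making it explicit is a welcome clarification. You are also right to flag the sign of the $d_i$: the parallel-copy interpretation requires nonnegative multiplicities, and taking $\rM=\bigl(\begin{smallmatrix}1&0&-1\\0&1&1\end{smallmatrix}\bigr)$ with $D=\diag(1,1,-1)$ gives $\vol\rZ(\rM D)=3$ while $\rM D\rM^{\top}=\bigl(\begin{smallmatrix}0&1\\1&0\end{smallmatrix}\bigr)$ has $|\det|=1$, so the hypothesis should indeed read ``nonnegative integer entries.''
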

			This result generalizes to the case that $D$ is a diagonal matrix with real entries, as we show in the next section.

		%%%%%%%%%%%%%%%%%%%%%
		%%%%%%%%%%%%%%%%%%%%%

		%%%%%%%%%%%%%%%%%%%%%
		%%%%%%%%%%%%%%%%%%%%%

		\section{Weighted Regular Matroids}
		\label{section:weightedCase}

			Let $\MM$ be a regular matroid on $n$ elements, $\rM$ be a unimodular representation of $\MM$, $D$ be a diagonal matrix with diagonal $\omega = (w_{1}, \dots, w_{n}) \in \R^{n}$, and $\rM_{\omega} = \rM D$ and $ \rL_{\omega} = \rM D \rM^{\top}$ be as in Subsection \ref{subsection:Matroids}.
			As scaling columns of $\rM$ does not affect the matroid $\MM$, we have $\MM(\rM) \isomorphic \MM(\rM_{\omega})$.
			In particular, scaling the columns of $\rM$ does not affect the cocircuits, and so ${}_{\Z}\left \langle \mathcal{C}^{*} \right \rangle = {}_{\Z}\left \langle \rM^{\top} \right \rangle$.

			Let $F$ be a facet of $\rZ_{0}(\rM_{w})$ corresponding to the cocircuit $C^{\star}$.
			Then $F$ is given by
			\[
				F_{C^{*}} 
				= \sum_{i: C^{*}_{i} = 1} \tfrac{1}{2}w_{i}\rM_{i} 
				- \sum_{i: C^{*}_{i} =-1} \tfrac{1}{2}w_{i}\rM_{i} 
				+ \sum_{i: C^{*}_{i} = 0} w_{i}S_{i},
			\]
			from which it is clear that the barycenter of $F_{C^{*}}$ is $\frac{1}{2}M_{\omega}C^{*}$.
			It follows that the lattice spanned by $\rL_{\omega}$ equals the lattice spanned by $\rM_{\omega}\mathcal{C}^{*}$, generalizing Corollary \ref{corollary:taleOfTwoLattices}.
			Replacing $\rM$ and $\rL$ in the proof of Theorem \ref{theorem:polytopalMatroidMTT} by $\rM_{\omega}$ and $\rL_{\omega}$, respectively, proves the following version of the matrix tree theorem for weighted regular matroids.			
			\begin{thm}
			 \label{thm:weightedPolytopalMatroidMTT}
			 	Let $\MM$ be a regular matroid on $n$ elements with full-rank unimodular representation $\rM$ and let $D = \diag(\omega)$ be an $n \times n$ diagonal matrix with real entries.
			 	Then $\vol(\rZ(\rL_{\omega})) = \vol(\rZ(\rM_{\omega}))$.
			\end{thm}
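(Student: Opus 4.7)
The plan is to retrace the dissect-and-rearrange argument in the proof of Theorem \ref{theorem:polytopalMatroidMTT} with $\rM_\omega$ and $\rL_\omega$ substituted for $\rM$ and $\rL$. The observation that enables this substitution is that rescaling the columns of $\rM$ by nonzero scalars preserves the underlying oriented matroid, so $\MM(\rM_\omega) \cong \MM(\rM)$ share the same set $\mathcal{C}^{*}$ of cocircuit sign vectors.

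The first step is to establish the weighted analog of Corollary \ref{corollary:taleOfTwoLattices}, namely ${}_{\Z}\langle \rL_\omega \rangle = {}_{\Z}\langle \rM_\omega \mathcal{C}^{*} \rangle$. Both lattices arise as images under the linear map $\rM D$ of ${}_{\Z}\langle \rM^{\top} \rangle$ and ${}_{\Z}\langle \mathcal{C}^{*} \rangle$ respectively, and these two preimage lattices coincide by Theorem \ref{thm:cocircuitSpace}. Combined with the facet-barycenter formula $\frac{1}{2}\rM_\omega C^{*}$ recalled just before the theorem statement, this identifies ${}_{\Z}\langle \rL_\omega \rangle$ with the lattice of scaled facet barycenters of the centered zonotope $\rZ_0(\rM_\omega)$.

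With this identification in hand, I would construct the cones $\sigma_\epsilon := \posHull\{\epsilon_i (\rL_\omega)_i : i \in [d]\}$, the polytopes $P_\epsilon := \sigma_\epsilon \cap \rZ_0(\rM_\omega)$, and the shifts $Q_\epsilon := P_\epsilon + \sum_{i : \epsilon_i = -}(\rL_\omega)_i$, and repeat the three parts of the original argument. The inclusion $\bigcup_\epsilon Q_\epsilon \subseteq \rZ(\rL_\omega)$ rests only on the fact that the width of $\rZ_0(\rM_\omega)$ parallel to $(\rL_\omega)_i$ is at most $\|(\rL_\omega)_i\|$, which follows from the barycenter identification above: translating the centered zonotope by $(\rL_\omega)_i$ lands it in a disjoint tile of the tiling of its affine span. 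The reverse inclusion $\rZ(\rL_\omega) \subseteq \bigcup_\epsilon Q_\epsilon$ uses that $\rZ(\rM_\omega)$ tiles its affine span by translates in ${}_{\Z}\langle \rL_\omega \rangle$: the tiling is Shepard's Theorem \ref{theorem:spaceTiling} applied to the regular matroid $\MM(\rM_\omega)$, and the translation lattice is identified in the previous paragraph. The measure-zero overlap of distinct $Q_\epsilon$ is a purely set-theoretic observation that transfers verbatim, yielding the desired volume-preserving bijection up to a nullset.

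The principal obstacle is that $\rM_\omega$ is generally neither unimodular nor integer, so the unimodularity-based machinery of Section \ref{section:matroidProof} does not apply to it directly. The resolution is that we never need it to: unimodularity is invoked only on the preimage side of the lattice identity, for $\rM$ itself, while the regularity of $\MM(\rM_\omega)$ -- the combinatorial invariant that powers Shepard's theorem -- is preserved under real column scaling. A secondary technicality is dealing with weight vectors $\omega$ that have zero entries (corresponding to loops in $\MM(\rM_\omega)$ that can be discarded without affecting the volume of either zonotope); once this bookkeeping is handled, the proof of Theorem \ref{theorem:polytopalMatroidMTT} transfers almost word for word.
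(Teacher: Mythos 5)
Your proposal follows the paper's proof essentially line for line: establish the weighted analogue of Corollary~\ref{corollary:taleOfTwoLattices} by applying the map $\rM D$ to the lattice identity from Theorem~\ref{thm:cocircuitSpace}, then re-run the dissect-and-rearrange argument of Theorem~\ref{theorem:polytopalMatroidMTT} with $\rM_\omega$ and $\rL_\omega$ in place of $\rM$ and $\rL$. Your added remarks (that unimodularity is needed only for $\rM$ and not $\rM_\omega$, that Shepard's theorem still applies because regularity is preserved under scaling, and that zero weights produce discardable loops) are correct clarifications of points the paper leaves implicit.
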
 
			This result gives a new proof for Theorem 5.5 in \cite{an2014canonical} while simultaneously generalizing it from weighted graphs to regular matroids.
			Moreover, by Theorem \ref{theorem:associatedVolumes} and the fact that duals of regular matroids are regular, our result implies the dual version of the matrix tree theorem (see Theorem 5.2 in \cite{an2014canonical}), generalized to regular matroids.
			All of this is done without use of the Cauchy-Binet Theorem nor divisor theory on graphs.

		%%%%%%%%%%%%%%%%%%%%%
		%%%%%%%%%%%%%%%%%%%%%

		%%%%%%%%%%%%%%%%%%%%%
		%%%%%%%%%%%%%%%%%%%%%

		\section{The Graphical Case}
		\label{section:graphicCase}

			Let $G=([n],E)$ be a connected graph on $n$ vertices with signed vertex-edge incidence matrix $\rN$ and Laplacian $\rL$.	
			The rank of $\rN$ (and hence of $\rL$) is equal to the maximal size of a linearly independent subset of the columns of $\rN$.
			This is exactly the number of edges in a spanning tree of $G$, i.e., $\rank \rN = \rank \rL = n-1$.
			It follows that $0$ is an eigenvalue of $\rL$ of multiplicity $1$, and it is easy to check that the all-ones vector $\vecone_{n}$ is a corresponding eigenvector.
			So the zonotope $\rZ(\rL)$ is no longer a parallelepiped and its volume is no longer obtained by computing the determinant of $\rL$, as was the case in the previous section.
			Nonetheless, we now modify our techniques from the previous section to obtain a polyhedral proof of the classical matrix tree theorem.

			Recall from the introduction that the original formulation for the matrix tree theorem states that, for $G$ and $\rL = \rN\rN^{\top}$ as in the previous paragraph, the number of vertices times the number of spanning trees is equal to the product of the nonzero eigenvalues of $\rL$.
			The classical proof of this version of the matrix tree theorem proceeds in three steps.  
			First one uses the fact that $0$ is an eigenvalue of $\rL$ of multiplicity 1 with corresponding eigenvector $\vecone_{n}$ to show that all $n$ of the maximal principal minors of $\rL$ are equal and that the coefficient $c_{1}$ on the linear term of the characteristic polynomial of $\rL$ is equal to $n$ times any maximal principal minor.
			Then one uses the Cauchy-Binet theorem and the total unimodularity of $\rN$ to prove that each of these minors equals the number of spanning trees of $G$.
			Finally the theorem follows from the observation that, since $\rL$ is symmetric and $0$ is an eigenvalue of multiplicity $1$, the characteristic polynomial of $\rL$ factors over $\R$ and hence the coefficient $c_{1}$ is the product of the nonzero eigenvalues of $\rL$.
			Our polyhedral proof of the matrix tree theorem follows a similar tack. 
			
			First we show in Proposition \ref{proposition:ZLDecomposition} that the zonotope $\rZ(\rL)$ decomposes into $n$ parallelepipeds all having the same volume.
			Then we explain how results from the previous sections show that the volume of one (and hence any) of these parallelepipeds is equal to the number of spanning trees of $G$.
			Finally we show that the volume of $\rZ(\rL)$ is the product of the nonzero eigenvalues of $\rL$ as follows: First we construct two full-dimensional zonotopes, one having $d$-dimensional volume equal to $n$ times the $(d-1)$-dimensional volume of $\rZ(\rL)$ and the other having volume equal to $n$ times the product of the nonzero eigenvalues of $\rL$. 
			Then we show that these two zonotopes have the same volume using a proof technique reminiscent of that used to prove Theorem \ref{theorem:polytopalMatroidMTT}.  
			Moreover, we prove these results in greater generality whenever possible.

			Our first goal is to see how the factor of $n$ in the Matrix Tree Theorem manifests itself in the polyhedral set-up, the idea being that the zonotope of the Laplacian of $G$ is the union of $n$ zonotopes all having the same volume. 
			We formalize this in the following result which holds in the more general case that the matrix $\rM$ is only \defn{unimodular}, i.e., it has all \emph{maximal} minors in $\{-1,0,1\}$.
			\begin{prop}
			\label{proposition:ZLDecomposition}
				Let $\rM$ be a unimodular matrix and let $\rL = \rM\rM^{\top}$.
				Then the zonotope $\rZ(\rL)$ decomposes into $|\BB(\rM^{\top})|$ top dimensional parallelepipeds all having the same volume.
			\end{prop}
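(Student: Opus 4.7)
The plan is to transfer Stanley's decomposition of $\rZ(\rL)$ to that of $\rZ(\rM^\top)$ via the linear map $\rM$, where the uniform-volume claim becomes easier to verify.

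First, I would observe that $\MM(\rL) = \MM(\rM^\top)$: every column of $\rM^\top$ lies in the row space $V$ of $\rM$, and $\rM|_V$ is a linear isomorphism from $V$ onto the column space of $\rL$, since $\ker\rM$ is the orthogonal complement of $V$. Such an isomorphism preserves linear dependencies among the columns of $\rM^\top$, inducing an equality of matroids on the common ground set $[d]$. Consequently, Stanley's Theorem~\ref{theorem:zonotopeDecomp} decomposes $\rZ(\rL)$ into $|\BB(\rM^\top)|$ top-dimensional half-open parallelepipeds $\Pi_B^{(\rL)}$, indexed by $B \in \BB(\rM^\top)$. Applying the same theorem to $\rZ(\rM^\top)$ and using the identity $\rZ(\rL) = \rM(\rZ(\rM^\top))$, linearity gives $\rM(\Pi_B^{(\rM^\top)}) = \Pi_B^{(\rL)}$ for each basis $B$.

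Next, since $\rM|_V$ is a linear isomorphism between two $r$-dimensional Euclidean subspaces (of $\R^n$ and $\R^d$ respectively), it scales every $r$-dimensional volume by the same constant Jacobian $\kappa$. It therefore suffices to show that all $\Pi_B^{(\rM^\top)}$ share a common $r$-dimensional volume. This volume equals $\sqrt{\det(\rM_B \rM_B^\top)}$, where $\rM_B$ denotes the $r\times n$ submatrix of $\rM$ whose rows are indexed by $B$. By Cauchy-Binet, $\det(\rM_B\rM_B^\top) = \sum_{|J|=r}\det((\rM_B)_J)^2$; unimodularity of $\rM$ forces each such determinant to lie in $\{-1,0,1\}$, so the sum collapses to a count of the $r$-subsets $J\subseteq[n]$ for which $(\rM_B)_J$ is nonsingular. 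The crucial observation is that because $B$ indexes a linearly independent set of rows, the coordinate projection onto $B$ restricts to an isomorphism on the column space of $\rM$; this identifies the contributing $J$ with $\BB(\rM)$, so $\det(\rM_B\rM_B^\top)=|\BB(\rM)|$, independent of $B$.

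Combining these steps, every $\Pi_B^{(\rL)}$ has common $r$-dimensional volume $\kappa\sqrt{|\BB(\rM)|}$, proving the proposition. The main obstacle is the matroid identity buried in the final computation, namely that the column matroid of $\rM_B$ agrees with $\MM(\rM)$ whenever $B$ is a row basis. Once this is secured, the Cauchy-Binet step is a one-line count, and the remainder of the argument is simply bookkeeping about transporting Stanley's decomposition across the linear isomorphism $\rM|_V$.
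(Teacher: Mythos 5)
Your proof is correct and arrives at the same decomposition, but it establishes the equal-volume claim by a genuinely different mechanism. Both you and the paper begin by observing that $\MM(\rM^\top)$ and $\MM(\rL)$ are isomorphic (since $\rM$ restricted to $\im(\rM^\top)=\ker(\rM)^\perp$ is a linear isomorphism), and both invoke Stanley's decomposition (Theorem~\ref{theorem:zonotopeDecomp}) to break $\rZ(\rL)$ into $|\BB(\rM^\top)|$ half-open parallelepipeds. From there you diverge. The paper's route is purely lattice-theoretic: unimodularity makes each basis $B\in\BB(\rM^\top)$ a $\Z$-basis of the lattice $\LL=\Z^n\cap\im(\rM^\top)$, so each $\Pi_B^{(\rL)}=\rM(\rZ(\rM^\top_B))$ is a fundamental parallelepiped of $\rM\LL={}_\Z\langle\rL\rangle$, and all fundamental parallelepipeds of a fixed lattice share the same covolume. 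Your route instead pulls the decomposition back to $\rZ(\rM^\top)$, writes the Euclidean volume of $\Pi_B^{(\rM^\top)}$ as $\sqrt{\det(\rM_B\rM_B^\top)}$, and evaluates the Gram determinant by Cauchy--Binet, arguing that it always equals $|\BB(\rM)|$. Your key auxiliary observation — that when $B$ is a row basis, coordinate projection onto $B$ is an isomorphism on $\im(\rM)$, so the contributing column sets $J$ are exactly $\BB(\rM)$ — is correct. The trade-offs: your argument yields an explicit formula for the common volume, which the paper never needs; the paper's argument is shorter and avoids any determinantal identity. It is worth emphasizing that the paper's abstract explicitly advertises ``no use is made of the Cauchy--Binet Theorem,'' so reintroducing Cauchy--Binet here, while perfectly valid, runs against the grain of the paper's program. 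Finally, your Cauchy--Binet step requires that all rank-sized minors of $\rM$ lie in $\{0,\pm1\}$ (to collapse the sum of squared minors to a basis count), while the paper's covolume argument formally needs only that every basis $B$ generate the lattice $\LL$; these readings of ``unimodular'' for a rank-deficient matrix coincide in the cases the paper cares about (e.g.\ the totally unimodular incidence matrix $\rN$), but the distinction is worth keeping in mind if you aim for the same generality.
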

			\begin{proof}
				As $\im (\rM^{\top})$ is orthogonal to $\ker\rM$, an independent set in the matroid $\MM(\rM^{\top})$ remains independent after multiplication by $\rM$, i.e., $\MM(\rM^{\top})$ and $\MM(\rL)$ are isomorphic matroids.
				As $\rM$ is unimodular, so is $\rM^{\top}$, and so any set of columns $B$ of $\rM^{\top}$ corresponding to a basis of its matroid is a $\Z$-basis for the lattice $\LL = \Z^{n} \cap \im(\rM^{\top})$, that is, $\rZ(B)$ is a fundamental parallelepiped of $\LL$. 
				It follows that every top dimensional parallelepiped in a maximal cubical decomposition of $\rZ(\rL)$ is a fundamental parallelepiped for the lattice $\rM \LL = {_{\Z}}{\left \langle \rL  \right \rangle}$, the image of $\LL$ under $\rM$.
				The result now follows from the fact that the volume of a fundamental parallelepiped of a lattice is a lattice invariant.
			\end{proof}

		        \begin{figure}[htbp]
			\centering	
				\begin{subfigure}{.3\textwidth}
				  \centering
				  \includegraphics[width=\linewidth]{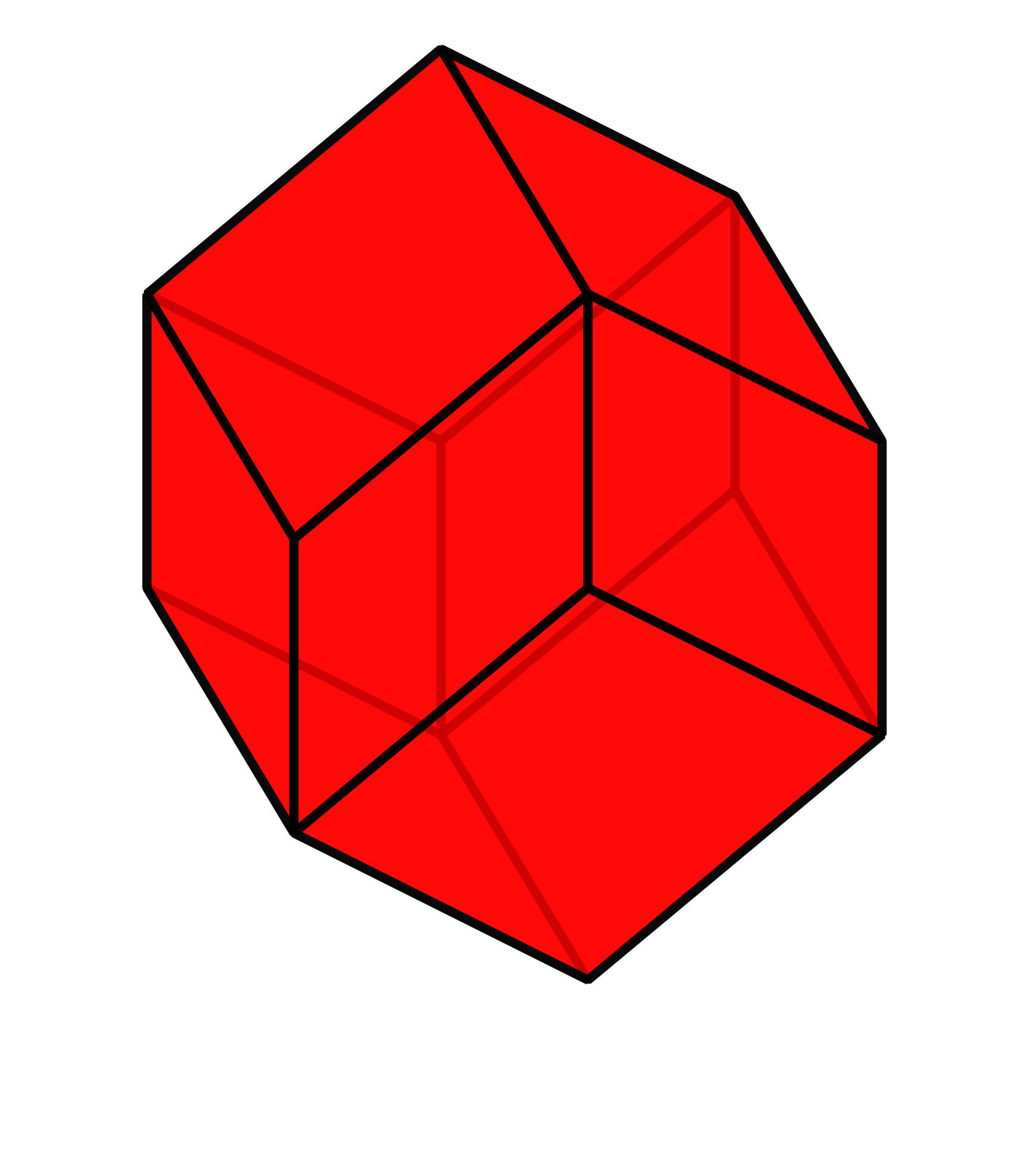}
				  \caption{$\rZ(\rL) \subset \R^{4}$ of $K_{4}$}
				  \label{fig:zlDecomp1}
				\end{subfigure}%
				\begin{subfigure}{.3\textwidth}
				  \centering
				  \includegraphics[width=\linewidth]{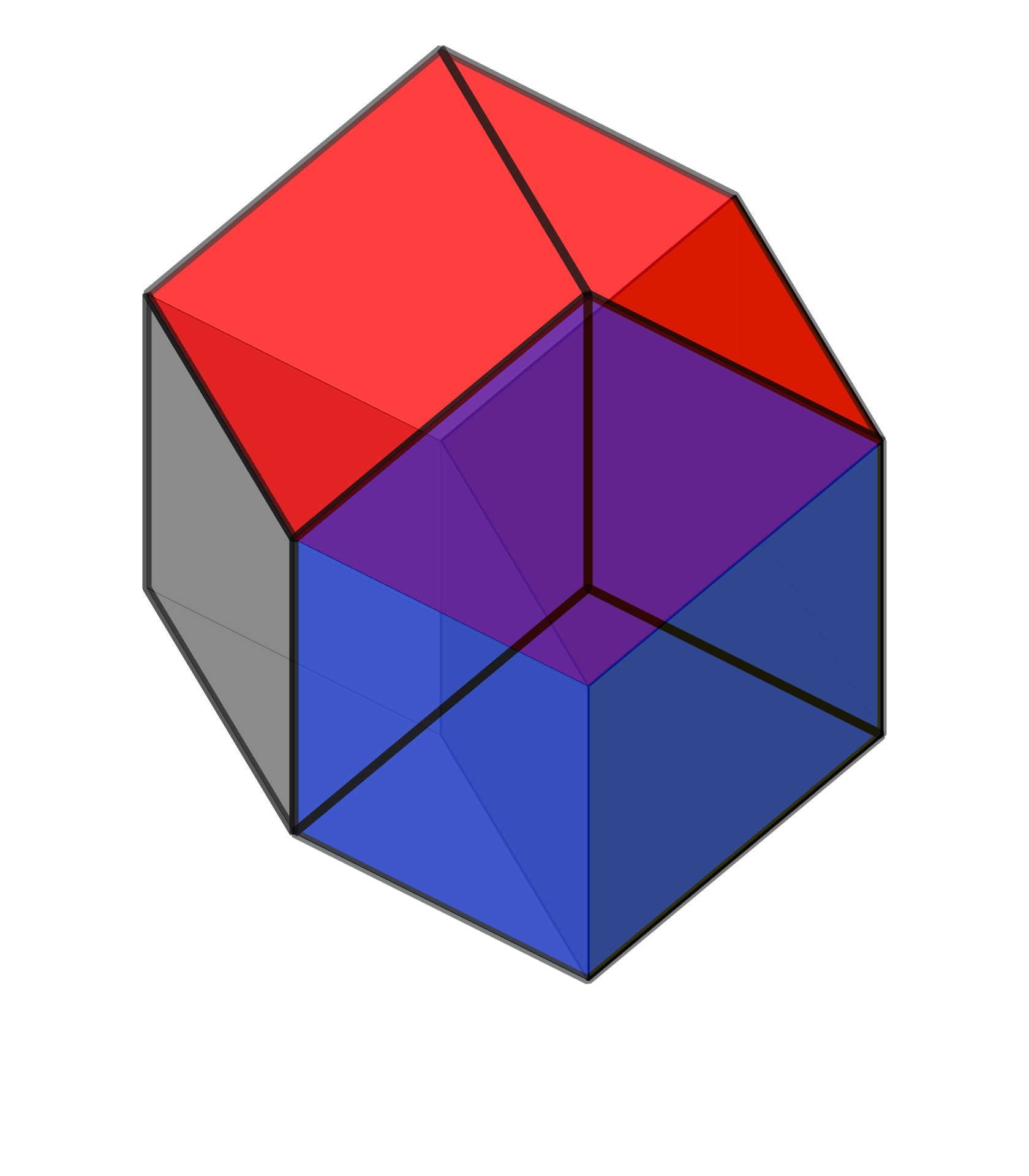}
				  \caption{Parallelepipeds of $\rZ(\rL)$}
				  \label{fig:zlDecomp2}
				\end{subfigure}
				\begin{subfigure}{.3\textwidth}
				  \centering
				  \includegraphics[width=\linewidth]{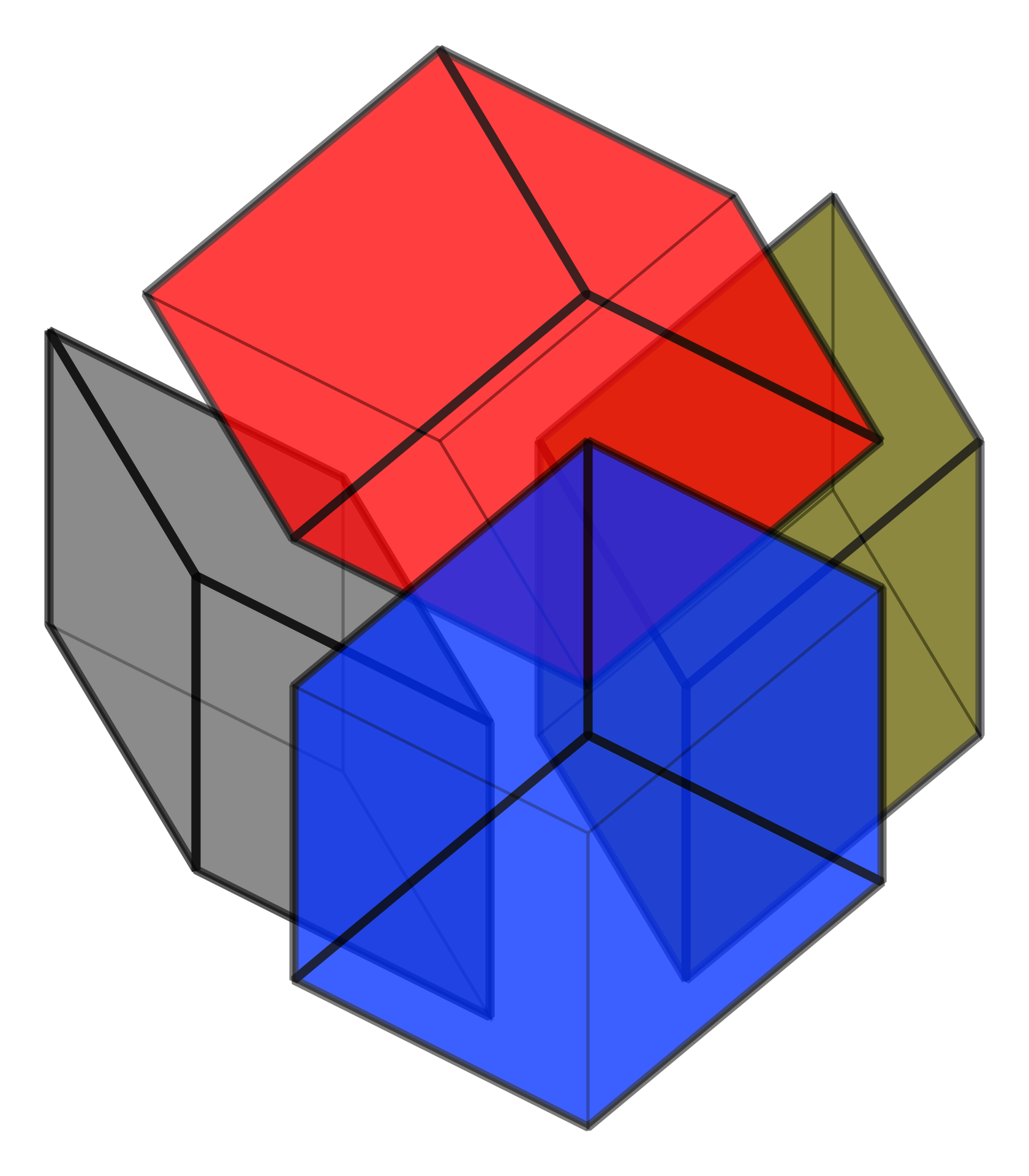}
				  \caption{An exploded view}
				  \label{fig:zlDecomp3}
				\end{subfigure}
			\caption{Proposition \ref{proposition:ZLDecomposition} at work on $\rZ(\rL)$ of the complete graph $K_{4}$.}
			\label{fig:ZLdecomp}
			\end{figure}

			\begin{example2*}
				Consider the complete graph $K_{4}$ on four vertices with edges oriented so that $i \rightarrow j$ if $i<j$. The signed vertex-edge incidence matrix $\rN$ and the Laplacian $\rL$ are 
			\[\rN = \begin{pmatrix}
				-1 & -1 & -1 &  0 &  0 & 0 \\
				 1 &  0 &  0 & -1 & -1 & 0 \\
				 0 &  1 &  0 &  1 &  0 & -1 \\
				 0 &  0 &  1 &  0 &  1 & 1
			\end{pmatrix},
			\hspace{44pt}
			\rL = \begin{pmatrix}
				3 & -1 & -1 & -1 \\
				-1 & 3 & -1 & -1\\
				-1 & -1 & 3	& -1\\
				-1 & -1 & -1 & 3	
			\end{pmatrix}.\]
			The three dimensional zonotope $\rZ(\rN) \subset \R^{4}$ is a translate of the classical permutahedron obtained by taking the convex hull of all points obtained from $[1,2,3,4]$ by permuting coordinates.

			The zonotope $\rZ(\rL)$ is the cubical zonotope (all of its facets are 2-cubes) displayed in Figure \ref{fig:zlDecomp1}. By Proposition \ref{proposition:ZLDecomposition} it is the union of four parallelepipeds of equal volume; see Figure \ref{fig:zlDecomp2} for the subdivision of $\rZ(\rL)$ into parallelepipeds and Figure \ref{fig:zlDecomp3} for an exploded view of the subdivision. 
			\end{example2*}

			In the graphical case, Proposition \ref{proposition:ZLDecomposition} tells us that the zonotope $\rZ(\rL)$ decomposes into $n$ parallelepipeds in ${_{\R}}{\left \langle \rN \right \rangle}$ all having the same volume.	
			More explicitly the decomposition is $\rZ(\rL) = \bigcup_{i} \Pi_{i}$ where, for $i \in [n]$, the parallelepiped $\Pi_{i}$ is generated by all of the columns of $\rL$ save for the $i^{\text{th}}$.
			We now show that the volume of one (and hence, any) of these parallelepipeds is equal to the number of spanning trees of $G$.
			To see this first note that Theorem \ref{corollary:volumeOfTUzonotope} holds regardless of the corank of the unimodular matrix involved, so in our case the volume of $\rZ(\rN)$ equals the number of spanning trees of $G$ (recall here that volume is taken with respect to the affine hull of the columns of $\rN$.)
			Also independent of the corank of the defining matrix is Lemma \ref{corollary:taleOfTwoLattices}, in which we showed that the lattice generated by the columns of the Laplacian is equal to the lattice generated by the matrix~$B$ whose columns are the barycenters of the facets of $\rZ(\rN)$ scaled by a factor of 2.
			Since any $n-1$ columns of $\rL$ form a lattice basis for 
			${_{\Z}}{\left \langle \rL \right \rangle} = {_{\Z}}{\left \langle B \right \rangle}$, we only need to check that an appropriate modification of Theorem \ref{theorem:polytopalMatroidMTT} still holds when we drop the full-rank condition.
			Indeed, in the proof of the theorem the full-rank condition guaranteed us that the columns of $\rL$ formed a basis for their $\Z$-span, whereas when the corank of $\rL$ is greater than 0 the columns over-determine the $\Z$-span.
			Nonetheless, the proof of Theorem \ref{theorem:polytopalMatroidMTT} at the end of Section~\ref{section:matroidProof} goes through verbatim for the following theorem in which $\rM$ is allowed to have arbitrary corank.
			\begin{thm}
			\label{thm:polytopalMMTTcorank}
				Let $\MM$ be a regular matroid and $\rM$ be a unimodular representation of $\MM$ over $\R$.
				Let $\rL = \rM\rM^{\top}$ and let $\overline{\rL}$  be the matrix obtained by taking any basis for $_{\Z}{\left \langle \rL \right \rangle}$ from among the columns of $\rL$.
				Then the volume of $\rZ(\rM)$ equals the volume of $\rZ\left(\overline{\rL}\right)$.
			\end{thm}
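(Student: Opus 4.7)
The plan is to replay the dissect-and-rearrange argument of Theorem~\ref{theorem:polytopalMatroidMTT} with $\overline{\rL}$ in place of $\rL$, working inside the $d$-dimensional subspace $\im\rM$ where $d=\rank \rM$. By construction the columns of $\overline{\rL}$ are a $\Z$-basis for ${}_{\Z}\langle \rL\rangle$, so in particular they are $\R$-linearly independent and span $\im \rL = \im \rM$. Hence $\rZ(\overline{\rL})$ is a genuine $d$-dimensional parallelepiped inside $\im\rM$, and its volume is well defined with respect to the induced inner product on this subspace.

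Next I would invoke Corollary~\ref{corollary:taleOfTwoLattices}, whose proof makes no use of a full row-rank hypothesis on $\rM$, to conclude
\[
{}_{\Z}\langle \overline{\rL}\rangle \;=\; {}_{\Z}\langle \rL\rangle \;=\; {}_{\Z}\langle B\rangle.
\]
Since $\MM$ is regular, Shephard's Theorem~\ref{theorem:spaceTiling} says that $\im\rM$ is tiled by facet-to-facet translates of $\cZono{\rM}$ along exactly this lattice. Consequently $\overline{\rL}_1,\ldots,\overline{\rL}_d$ are tile-to-tile translation vectors of the tiling, which is precisely the geometric property of the columns of $\rL$ that drives the proof of Theorem~\ref{theorem:polytopalMatroidMTT}.

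With these two ingredients in place I would define, for each $\epsilon\in\{+,-\}^d$, the cone $\sigma_\epsilon = \posHull\{\epsilon_i\overline{\rL}_i : i\in[d]\}$, the shift vector $v_{\epsilon^-} = \sum_{i:\epsilon_i=-}\overline{\rL}_i$, and the pieces $P_\epsilon = \sigma_\epsilon\cap\cZono{\rM}$ and $Q_\epsilon = P_\epsilon+v_{\epsilon^-}$, and then repeat the four steps verbatim: (i) $\bigcup_\epsilon P_\epsilon = \cZono{\rM}$, because the $\overline{\rL}_i$ form an $\R$-basis of $\im\rM$ and the width of $\cZono{\rM}$ in direction $\overline{\rL}_i$ is at most $\|\overline{\rL}_i\|$; (ii) $\bigcup_\epsilon Q_\epsilon\subseteq\rZ(\overline{\rL})$ by expressing each $p\in\cZono{\rM}$ in the $\overline{\rL}$-basis and reading off that every coefficient in $p+v_{\epsilon^-}$ lies in $[0,1]$; (iii) $\rZ(\overline{\rL})\subseteq \bigcup_\epsilon Q_\epsilon$ by reducing an arbitrary $q\in\rZ(\overline{\rL})$ modulo ${}_{\Z}\langle\overline{\rL}\rangle$ into a tile of the $\cZono{\rM}$-tiling of $\im\rM$; and (iv) the relative interiors of the $Q_\epsilon$ are pairwise disjoint because a double preimage would force two distinct points of $\cZono{\rM}$ to both sit on its boundary.

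The one place I expect to pause is the opening geometric observation of the original proof, namely that $\conv\{\veczero,\overline{\rL}_i\}$ meets a proper face of $\cZono{\rM}$ at a common barycenter. Since each $\overline{\rL}_i$ is literally one of the columns of $\rL$, and since the lattice equality above identifies it as a legitimate translation vector of the tiling, this reduces to the fact already used in Section~\ref{section:matroidProof}; no new argument is needed. After this bookkeeping check the bijection $\phi\colon\bigcup\Relint P_\epsilon\to\bigcup\Relint Q_\epsilon$ is volume preserving on a full-measure subset, and the equality $\vol(\rZ(\rM))=\vol(\rZ(\overline{\rL}))$ follows.
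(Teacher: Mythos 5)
Your proposal matches the paper's intended proof exactly: the paper explicitly states that the argument of Theorem~\ref{theorem:polytopalMatroidMTT} ``goes through verbatim'' for this statement and then leaves the formal proof environment empty. Your careful accounting of which hypotheses are used where --- that Corollary~\ref{corollary:taleOfTwoLattices} and Shephard's tiling theorem do not require full row rank, and that $\overline{\rL}$ supplies the $\R$-basis of $\im\rM$ that $\rL$ itself no longer does --- is precisely the bookkeeping the paper elides, so this is the same route, just written out.
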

			\begin{proof}
			\end{proof}
			
			In the graphical case, taking Proposition \ref{proposition:ZLDecomposition} and Theorem \ref{thm:polytopalMMTTcorank} together shows that the volume of $\rZ(\rL)$ is $n$ times the number of spanning trees.
			So all that remains is to show that the volume of $\rZ(\rL)$ is the product of nonzero eigenvalues of $\rL$.
			We will achieve this by defining two new full-dimensional zonotopes $\rZ(\Lambda)$ and $\rZ(\Gamma)$ and then showing that 
		\begin{enumerate}[(i)]
		\item  $\vol\rZ(\Lambda)=n\lambda_1\cdots\lambda_{n-1}$, 
		\item $\vol\rZ(\Gamma)=n\vol\rZ(\rL)$, and 
		\item $\vol\rZ(\Lambda) = \vol\rZ(\Gamma)$.
		\end{enumerate}

		To construct these new zonotopes, define the matrices $\Lambda$ and $\Gamma$ by setting $\Lambda_{ij} = \rL_{ij}+1$ and letting $\Gamma=[\rL|\vecone]$ be the matrix obtained from~$\rL$ by appending a column of ones.

		To prove (i), observe that the columns of $\Lambda$ arise by summing the vector $\vecone$ to each column of the rank $(n-1)$ matrix~$\rL$, and that $\vecone$~is orthogonal to each of these columns. 
		In consequence, the columns of the $n \times n$ matrix~$\Lambda$ are linearly independent. 
		Thus, the zonotope $\rZ(\Lambda)$ is an $n$-dimensional parallelopiped with volume equal to the product of the eigenvalues of $\Lambda$.
		If $\lambda \in \Spec(\rL)$ is a nonzero eigenvalue with eigenvector $v$, then the sum of the coordinates of $v$ is zero.
		It follows that $\Lambda v = \rL v = \lambda v$, and so $\lambda$ is also an eigenvalue of~$\Lambda$.
		Since $\vecone \in \ker L$, it follows that $\Lambda \vecone = n \vecone$, and so $\Spec\Lambda = (\Spec(\rL) \setminus \{0\}) \cup \{n\}$, and $\vol\rZ(\Lambda) = n\,\lambda_{1}\cdots \lambda_{n-1}$.

		For (ii), first observe that $\det(\rN_{P_n}|\vecone)=n$, where $\rN_{P_n}$ ~is the signed incidence matrix of the path on $n$~vertices. Thus, the volume of any zonotope that is a prism $\rZ(M|\vecone)=\rZ(M)\times\vecone$ over a unimodular cube~$\rZ(M)$ is~$n$.
		Our claim $\vol\rZ(\Gamma) = n \vol \rZ(\rL)$ now follows from the following general fact:
		\begin{prop}
		\label{proposition:prism}
		Let $P \in \R^{n}$ be an $(n-1)$-dimensional lattice polytope with affine span~$S$ and let $\mathcal{L} = S \cap \Z^{n}$ be the induced lattice.
		For $v \in \Z^{n}\setminus S$ let $Q$ be the prism $P \times v$.
		Then 
		\[
		   \vol (Q) 
		   \ = \
		   h_{S}(v)\vol_{S}(P),
		\] 
		where $h_{S}(v)$ is the lattice height of $v$ from $S$ and $\vol_{S}$ is the induced volume form on~$\aff S$.
		\end{prop}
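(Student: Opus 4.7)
The plan is to pick a $\Z$-basis of $\Z^n$ adapted to the sublattice $\mathcal{L}$, in which both $\vol$ and $\vol_S$ reduce to ordinary Lebesgue measure, and then to use a volume-preserving shear to identify $Q$ with a straight prism of manifestly correct volume.

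First, translate $P$ so that a lattice point of $P$ sits at the origin; this leaves all quantities in the statement unchanged. Then $\mathcal{L} = S \cap \Z^n$ is a rank-$(n-1)$ \emph{saturated} sublattice of $\Z^n$, meaning $\mathcal{L} = (\R\mathcal{L}) \cap \Z^n$. The standard fact (a consequence of Smith normal form) that any $\Z$-basis of a saturated sublattice of $\Z^n$ extends to a $\Z$-basis of $\Z^n$ provides vectors $b_1, \dots, b_{n-1}$ spanning $\mathcal{L}$ together with a vector $b_n$ completing a $\Z$-basis of $\Z^n$. In these coordinates $S = \{x_n = 0\}$, $\mathcal{L} = \Z^{n-1}\times\{0\}$, and both $\vol_S$ and $\vol$ become Lebesgue measures on $\R^{n-1}$ and $\R^n$ respectively. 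Decomposing $v = v_{\parallel} + h\,b_n$ with $v_{\parallel} \in \mathcal{L}$ and $h \in \Z\setminus\{0\}$, the lattice translates of $S$ in $\Z^n$ are exactly the hyperplanes $\{x_n = k\}_{k \in \Z}$, so by definition $h_S(v) = |h|$.

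The main geometric step will be to introduce the linear map $\sigma\colon \R^n \to \R^n$ that fixes $\{x_n = 0\}$ pointwise and sends $h\,b_n$ to $v$. Its matrix in the $b_i$-basis is upper triangular with ones on the diagonal, so $\det \sigma = 1$; and a direct check confirms that $\sigma$ carries the straight prism $P + \conv\{\veczero,\, h\,b_n\}$ onto $Q = P + \conv\{\veczero,\, v\}$. Since $\sigma$ is volume-preserving and the straight prism has volume $|h|\cdot\vol_S(P)$ by Cavalieri, we conclude $\vol(Q) = h_S(v)\,\vol_S(P)$. The only step requiring care is the saturated-basis extension, but this is entirely standard.
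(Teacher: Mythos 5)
Your argument is correct, but it takes a genuinely different route from the paper. The paper proves the formula by Ehrhart-style lattice-point counting: it reduces to the case $h_S(v)=1$, counts lattice points in the $k$-th dilate of $Q$ by slicing along the hyperplanes of integer lattice height, observes that each slice carries the same count as $P\cap\frac1k\mathcal{L}$, and passes to the limit $k\to\infty$ to extract volumes; the general height case is handled by cutting $Q$ into $h_S(v)$ unit-height slabs. You instead choose coordinates adapted to the saturated sublattice $\mathcal{L}$ (extending a basis of $\mathcal{L}$ to a $\Z$-basis of $\Z^n$ via Smith normal form) and then apply a determinant-one shear carrying the straight prism of height $h_S(v)$ onto the oblique prism $Q$. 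Your approach is shorter and avoids both the limiting argument and the somewhat delicate claim that the lattice points distribute identically in each section; the paper's approach stays within the lattice-point-counting framework it uses elsewhere and needs no normal-form machinery. One point worth making explicit in your write-up: the step ``both $\vol_S$ and $\vol$ become Lebesgue measures'' is only valid because $\vol_S$ is the \emph{lattice-normalized} volume on $S$ (fundamental domain of $\mathcal{L}$ has volume one), not the Euclidean volume induced from the ambient metric --- a unimodular but non-orthogonal change of basis would rescale the latter. This is in fact the normalization the paper uses (its proof computes $\vol_S(P)$ as $\lim_k k^{-(n-1)}\#(P\cap\frac1k\mathcal{L})$), but since ``induced volume form'' reads as Euclidean at first glance, the clarification belongs in the argument.
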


		\begin{proof}

		Without loss of generality we may assume that $\veczero \in S$ so that $S$ is a linear hyperplane with primitive normal vector $u\in\Z^d$, say.
		For any $i \in \Z$ define $S_{i}$ to be the parallel translate of $S$ given by $\{x \in \R^{n} \suchthat \langle x,u\rangle = i\}$.
		Then for every $v \in \Z^{n}$ there is an $i \in \Z$ such that $v \in S_{i}$, and this is precisely the lattice height of $v$ with respect to $S$, $h_{S}(v) = i$.

		Suppose $v \in \Z^{n}$ satisfies $h_{S}(v)=1$.
		In the $k$-th dilate of $Q$, the only lattice points of~$\Z^n$ lie on the sections $Q \cap H_{i}$ where $H_i=\{x\in\R^n:h_S(x)=i\}$ for $0\le i\le k$.
		Moreover, the distribution of lattice points is the same in each section $Q\cap H_i$.
		Thus, the number of lattice points in the $k^{\text{th}}$ dilate of $Q = P \times v$ is exactly 
		\begin{align*}
		\#\left (Q \cap \frac{1}{k}\Z^{n}\right)	&=	(k\,h_{S}(v)+1)\;\#\!\left(P \cap \frac{1}{k}\mathcal{L}\right) \\
							&= (k+1)\;\#\!\left(P \cap \frac{1}{k}\mathcal{L}\right).
		\end{align*}
		So in this case we have 
			\begin{align*}
				\vol(Q)	&=	\lim_{k \to \infty} \frac{1}{k^{n}}\#\left(Q \cap \frac{1}{k}\Z^{n}\right)\\
						&=	\lim_{k \to \infty} \frac{k+1}{k}\frac{1}{k^{n-1}}\#\left(P \cap \frac{1}{k}\mathcal{L}\right)\\
						&=	\lim_{k \to \infty} \frac{1}{k^{n-1}}\#\left(P \cap \frac{1}{k}\mathcal{L}\right)\\
						&=	\vol_{S}(P).
			\end{align*}
		Since $Q$ is a full-dimensional prism, its lattice volume and Euclidean volume coincide.
		It follows that $\vol(P \times v) = \vol_{S}(P)$  for any $v \in \R^{n}$ with $h_{S}(v) =1$.

		For an arbitrary $v \in \Z^{n}$ with $h_{S}(v)=i$, the prism $Q$ decomposes into $i$ (typically rational) polytopes which are slices of $Q$ sitting between the affine hyperplanes $S_{j-1}$ and $S_{j}$ where $j \in [i]$.
		Each of these slices is a translated copy of a height-one prism over $P$ and hence has volume $\vol(P)$. 
		As there are $h_{S}(v)$ many of them, the result follows.
		\end{proof}

		The missing claim (iii), $\vol\rZ(\Lambda) = \vol\rZ(\Gamma)$, is true in much greater generality, and it is this generalization that we state in Theorem \ref{thm:thm4}, the proof of which uses a technique analogous to the proof of Theorem \ref{theorem:polytopalMatroidMTT}.

			\begin{thm}\label{thm:thm4}
				For any set $B = \{b_{1}, \dots, b_{n}\}$ of points that linearly span~$\R^n$,  let $\beta = \frac{1}{n}\sum_{i\in [n]}b_{i}$ be their barycenter and let $\Pi = \rZ(B)$ be the zonotope they generate.
				Let $P$ be the zonotope generated by $\beta$ together with the points $b_{i} - \beta$ for $i \in [n]$.
				Then $\vol \Pi = \vol P$. 
			\end{thm}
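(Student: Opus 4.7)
The plan is to apply Stanley's decomposition (Theorem~\ref{theorem:zonotopeDecomp}) to the zonotope~$P$, identify the bases of the generating set explicitly, and then reduce the sum of their determinants to a single determinant via column operations and multilinearity. Let me set $v_0 := \beta$ and $v_i := b_i - \beta$ for $i \in [n]$, so that $P = \rZ(v_0, v_1, \ldots, v_n)$ is generated by $n+1$ vectors in $\R^n$.

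First I would analyze the matroid $\MM$ on the generators of~$P$. Since $\sum_{i \in [n]} v_i = \sum b_i - n\beta = 0$ while the $b_i$ (and hence any $n-1$ of the $v_i$) are linearly independent, the set $\{v_1, \ldots, v_n\}$ is the unique circuit of size~$\le n$. In particular, every basis of~$\MM$ of size~$n$ must contain~$v_0$, so the bases are exactly $B_k := \{v_0\} \cup \{v_i : i \in [n], i \ne k\}$ for $k \in [n]$. By Theorem~\ref{theorem:zonotopeDecomp},
\[
   \vol P \;=\; \sum_{k=1}^n \bigl|\det[v_0,\, v_1,\, \ldots,\, \widehat{v_k},\, \ldots,\, v_n]\bigr|.
\]

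Next I would simplify each summand. Adding the first column $v_0 = \beta$ to each of the remaining columns replaces $v_i = b_i - \beta$ by~$b_i$ without changing the determinant, so the $k$-th summand equals $|\det[\beta, b_1, \ldots, \widehat{b_k}, \ldots, b_n]|$. Expanding $\beta = \tfrac{1}{n}\sum_{j=1}^n b_j$ in the first slot by multilinearity yields $n$~determinants, of which all but the $j=k$ one vanish because of a repeated column; the surviving term equals $\tfrac{(-1)^{k-1}}{n}\det[b_1, \ldots, b_n]$ after the cyclic permutation that moves $b_k$ back into position~$k$. Taking absolute values, each summand equals $\tfrac{1}{n}|\det[b_1,\ldots,b_n]| = \tfrac{1}{n}\vol\Pi$.

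Summing the $n$ identical contributions gives $\vol P = \vol\Pi$, as required. The main technical point to get right is verifying that the matroid $\MM$ has exactly the $n$ bases of size~$n$ listed above (i.e., that $v_0$ and any $n-1$ of the $v_i$ are always linearly independent, so that each $B_k$ is genuinely a basis and no other bases contribute); this is forced by the determinant computation, since the absolute values $\tfrac{1}{n}|\det[b_1,\ldots,b_n]|$ are all nonzero precisely because the $b_i$ span $\R^n$. Everything else is routine manipulation of the determinant.
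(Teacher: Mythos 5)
Your proof is correct, but it takes a genuinely different route from the paper's. The paper proves Theorem~\ref{thm:thm4} by the same dissect-and-rearrange technique it uses throughout: for each sign vector $\epsilon\in\{+,-\}^n$ it sets $P_\epsilon := P \cap \rZ(\epsilon B)$ and $v_\epsilon := \sum_{i:\epsilon_i=-}b_i$, then shows that the translated cells $P_\epsilon + v_\epsilon$ tile $\Pi$ with overlaps only on boundaries. You instead bypass the geometry entirely: you apply Stanley's decomposition (Theorem~\ref{theorem:zonotopeDecomp}) to $P$, identify the bases of the matroid on $\{v_0,v_1,\dots,v_n\}$ (noting that $\sum_i v_i = 0$ forces every basis to contain $v_0 = \beta$, and that each $B_k = \{v_0\}\cup\{v_i: i\ne k\}$ is independent), and then collapse the resulting sum of $n$ determinants to $|\det[b_1,\dots,b_n]|$ by a column operation and multilinearity of the determinant. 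The bookkeeping is clean: adding $\beta$ to each remaining column turns $v_i$ into $b_i$, expanding $\beta=\frac1n\sum_j b_j$ kills all but one term, and the cyclic permutation contributes the sign $(-1)^{k-1}$, which disappears under absolute value. Your route is shorter and more elementary, essentially reducing the theorem to a one-line identity of determinants; what it buys you is brevity. What the paper's route buys is thematic coherence -- it is the same dissection argument that powers the proof of Theorem~\ref{theorem:polytopalMatroidMTT}, and it exhibits an explicit volume-preserving bijection between $P$ and $\Pi$ rather than just equating two numbers. Both proofs are valid; the one slight imprecision in your write-up is the phrase ``the unique circuit of size $\le n$'' -- what you actually need (and what your determinant computation establishes) is that the bases are exactly the $B_k$, which follows because $\{v_1,\dots,v_n\}$ is dependent while each $B_k$ has nonzero determinant.
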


		Note that we obtain claim (iii) as a special case by taking $B, \Pi, $ and $P$ to be the columns of $\Lambda$, the zonotope $\rZ(\Lambda)$, and the zonotope $\rZ(\Gamma)$, respectively.
		Before proceeding with the proof in the general case, let us illustrate the techniques to be used:

		\begin{example2*}
		For the complete graph $K_{3}$ on three vertices, the zonotope $\rZ(\Gamma)$ is the prism over the hexagon $\rZ(\rL)$ shown in blue in Figure \ref{fig:thm4} intersecting the red parallelepiped $\rZ(\Lambda)$.

		\begin{figure}[htbp]
		  \centering
		        \includegraphics[width=.2\linewidth]{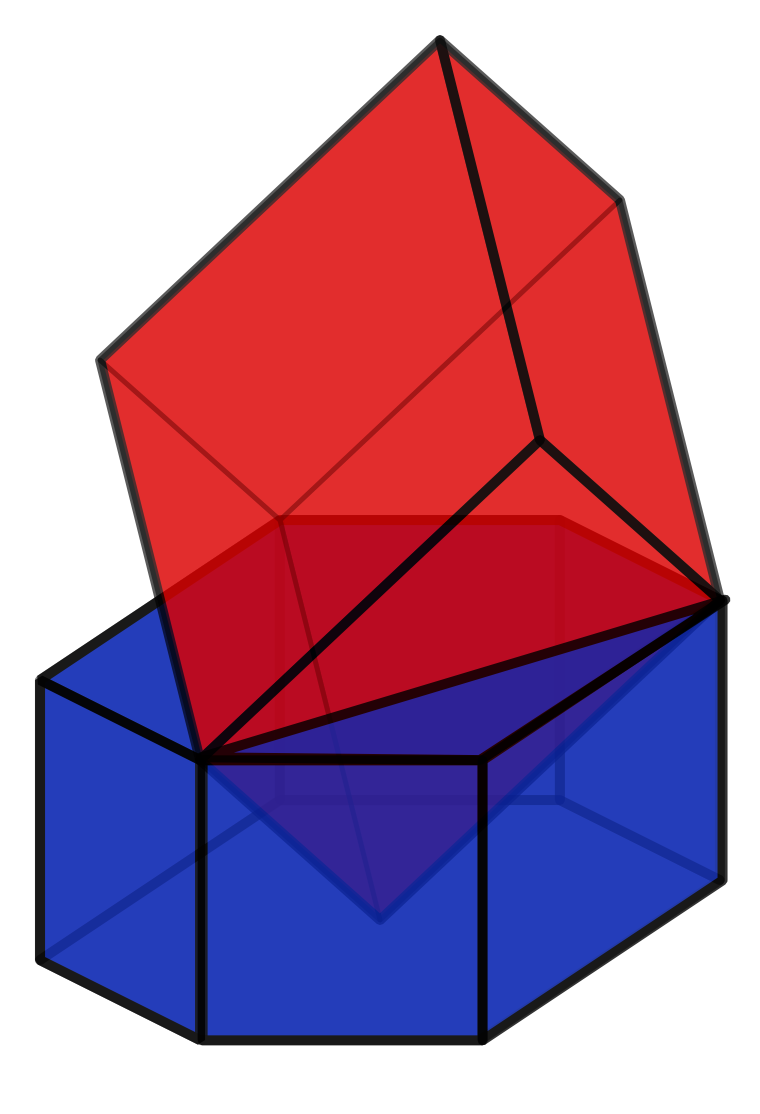}
		  \caption{The zonotopes $P = \rZ(\Gamma)$ and $\Pi = \rZ(\Lambda)$ appearing in Theorem \ref{thm:thm4} in the case of the graph $K_{3}$.}
		  \label{fig:thm4}
		\end{figure}

		For each sign vector $\epsilon \in \{+, -\}^{3}$, the simplicial cone spanned by $\epsilon \rL = \{\epsilon_{i}\rL_{i}\}$ intersects $\rZ(\Gamma)$ and these intersections are the~$P_{\epsilon}$.
		By construction, all of the $P_{\epsilon}$ are full-dimensional except for $P_{\{-,-,-\}}$ which consists only of the origin.
		The seven full-dimensional pieces are illustrated center-left in Figure~\ref{fig:thm4more}.

		        \begin{figure}[htbp]
			\centering	
		        \includegraphics[height=6.3cm]{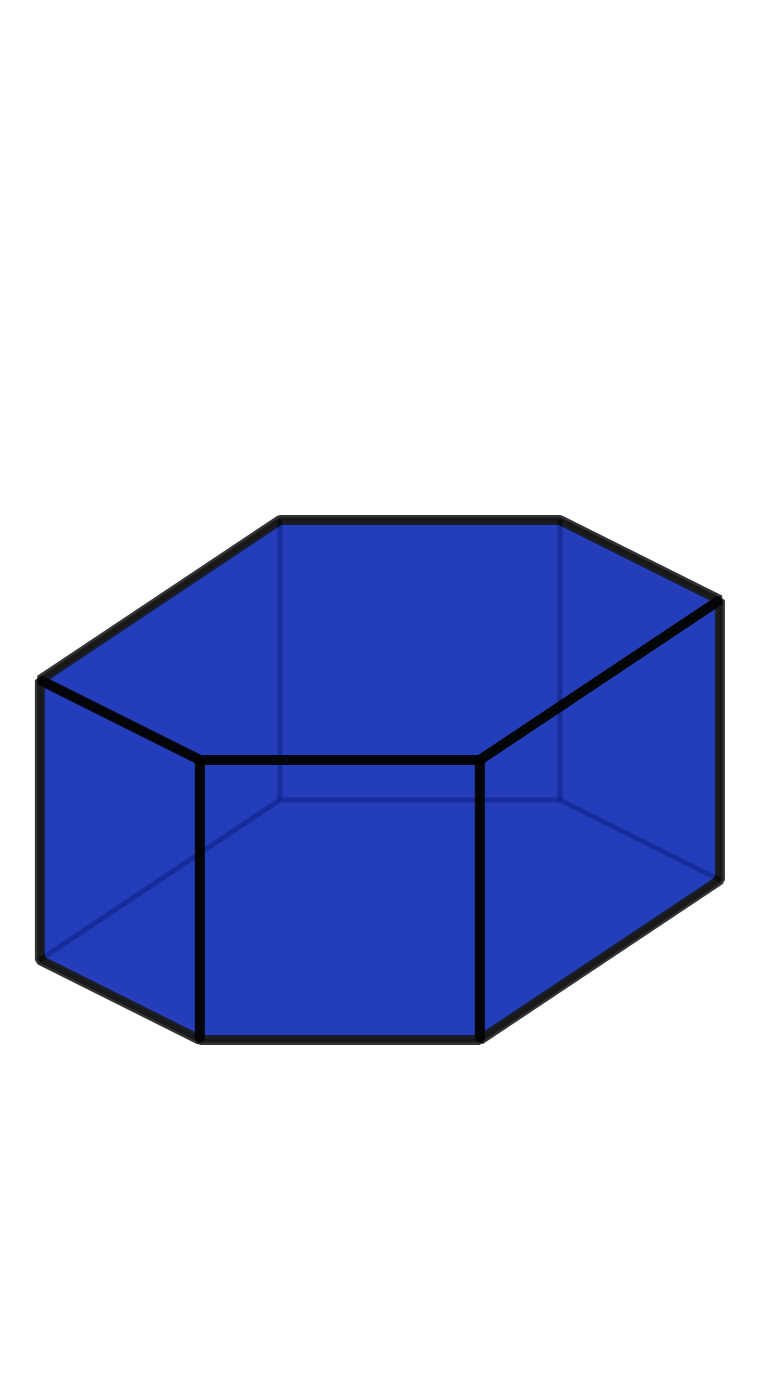}
			\hfill
		        \includegraphics[height=6.3cm]{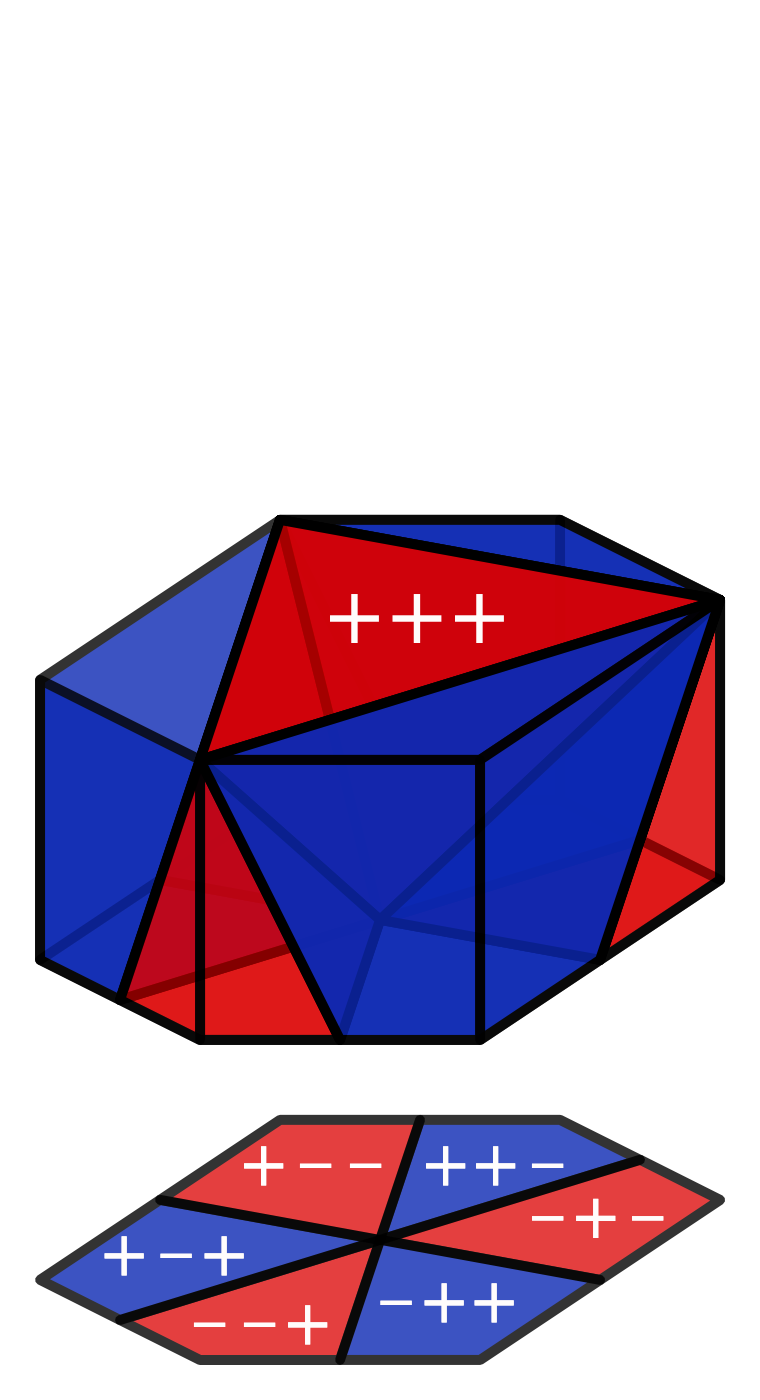}
		        \hfill
		        \includegraphics[height=6.3cm]{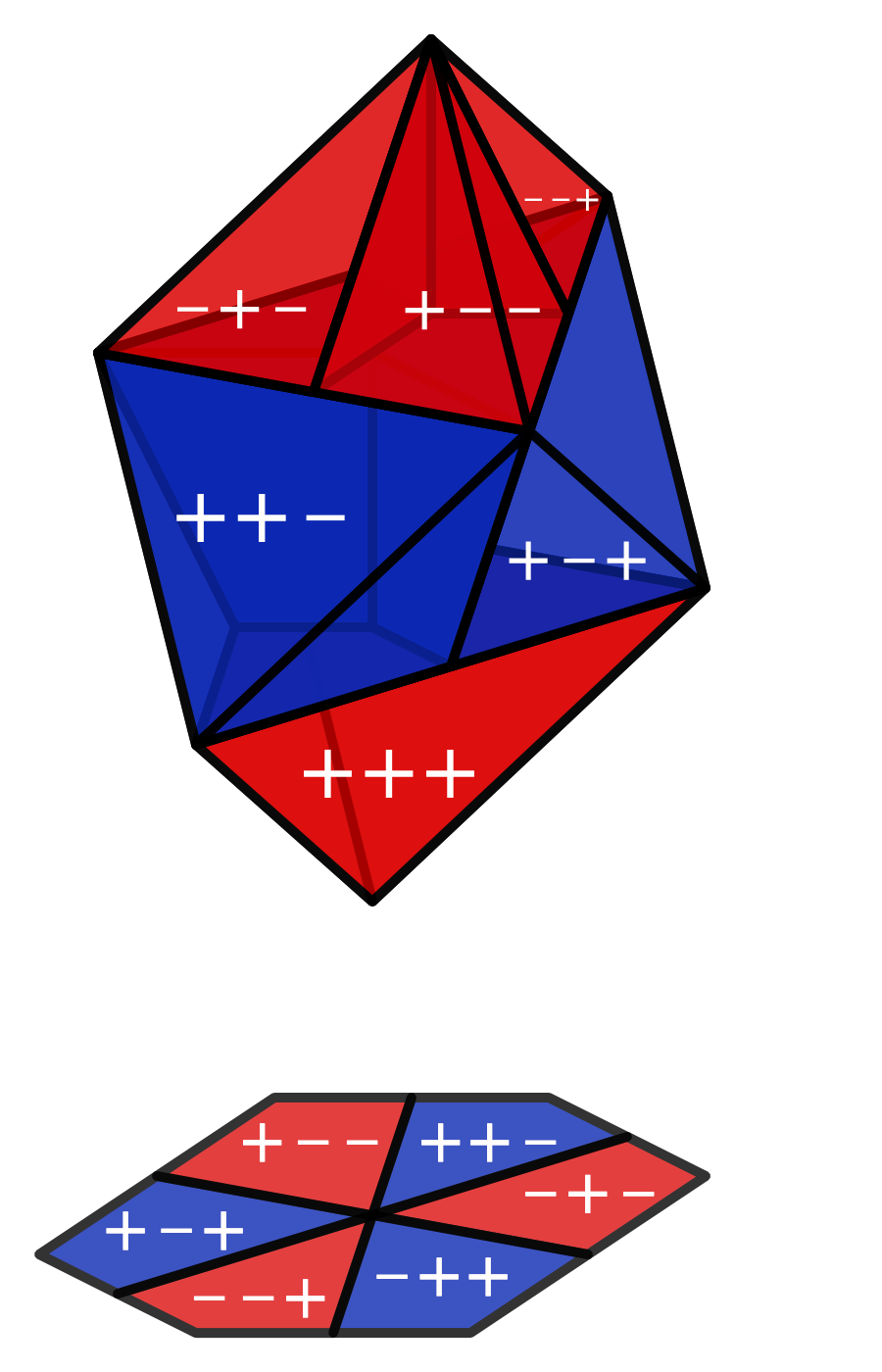}
		        \hfill
		        \includegraphics[height=6.3cm]{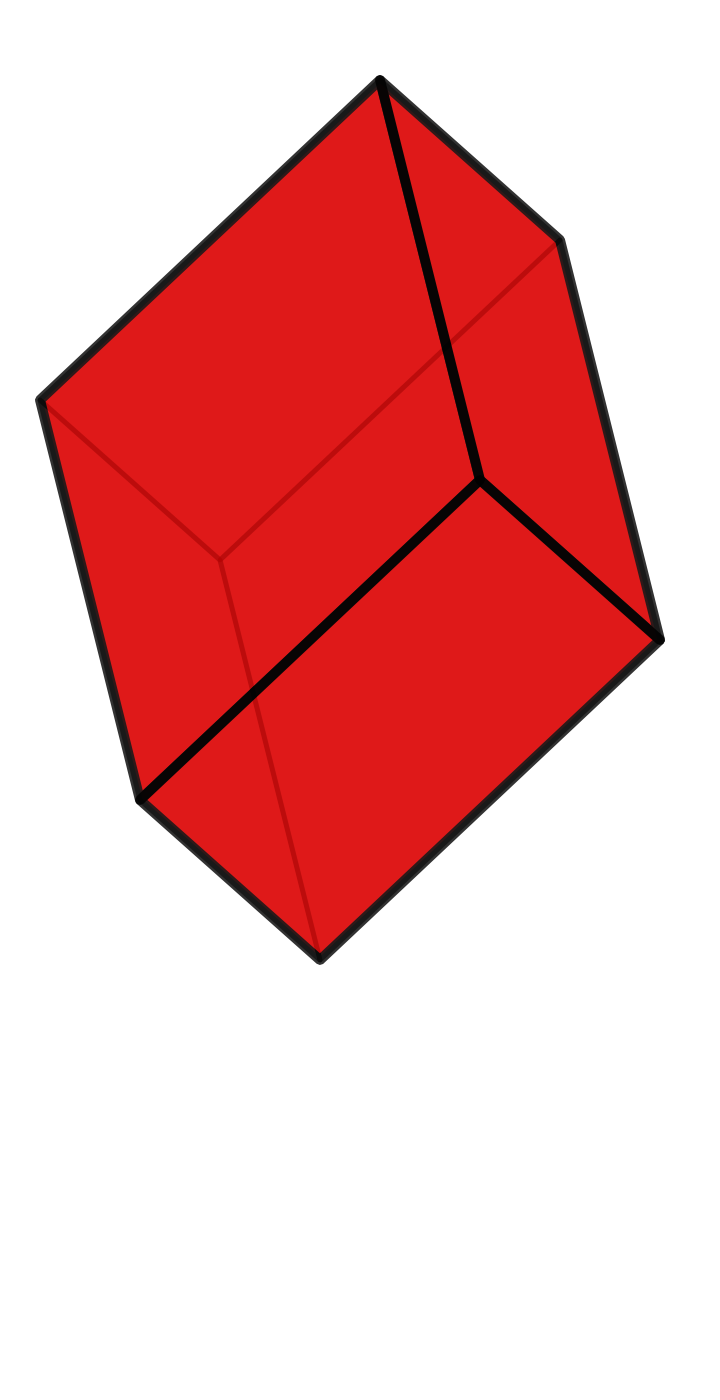}
		          \caption{The zonotopes of Theorem~\ref{thm:thm4} in the case of the graph $K_3$. From left to right: (i) $P=\rZ(\Gamma)$, (ii) the decomposition $Z(\Gamma)=\bigcup P_\epsilon$, (iii) the rearrangement $\bigcup Q_{\epsilon}$, and (iv) the parallelepiped $\rZ(\Lambda) = \bigcup Q_{\epsilon}$.}
		          \label{fig:thm4more}
		        \end{figure}

		Six of the seven $P_{\epsilon}$ are visible in the figure, while the colored hexagon beneath the prism suggests the location of the invisible piece.
		By translating each $P_{\epsilon}$ by the sum of all $\Lambda_{i}$ such that $\epsilon_{i}$ is negative, we obtain the union of the~$Q_{\epsilon}$ as seen center-right in Figure~\ref{fig:thm4more}.
		This union is exactly the zonotope of $\Lambda$.
		\end{example2*}

			\begin{proof}[Proof of Theorem~\ref{thm:thm4}]
				We prove that there is a decomposition of $P$ into full dimensional polytopal cells and a set of translations (one for each polytope in the decomposition) such that the union of the translated cells is exactly $\Pi$ and that if two shifted cells intersect, they do so only on their boundaries.
				
				First we show that for every point $p \in P$ there is a sign vector $\epsilon = \epsilon(p) \in \{+,-\}^{n}$ such that $ p \in \rZ(\epsilon B)$ where $\epsilon B := \{ \epsilon_{i}b_{i} \suchthat i \in [n]\}$.
				As $P$ is a zonotope, given any $p \in P$ there is an $\alpha \in [0,1]^{n+1}$ such that 
				 	\begin{align*}
						p	&=	\alpha_{n+1}\beta + \sum_{i \in [n]}\alpha_{i}(b_{i} - \beta)\\
							&=	\sum_{i\in[n]} \frac{1}{n}\left(n\alpha_{i} + \alpha_{n+1}- \sum_{j \in [n]}\alpha_{j}\right)b_{i}\\
							&= 	\sum_{i\in[n]} \frac{1}{n} \left( (n-1)\alpha_{i} + \alpha_{n+1}- \sum_{j \in [n] \setminus \{i\}}\alpha_{j}\right)b_{i}. 
					\end{align*}
				Let us abbreviate this last expression to $p=\sum_{i\in[n]}\gamma_i b_i$, where the $\gamma_i$ are unique because the $b_i$ form a basis of~$\R^n$. Since each $\alpha_{j}$ is in $[0,1]$, it follows that $\gamma_i\in[-1,1]$ for all~$i$. Therefore, setting $\epsilon_{i}=\sign\gamma_i$ if $\gamma_i\ne0$ (and $\epsilon_i=\pm$ arbitrarily if $\gamma_i=0$) proves the claim.

				For each $\epsilon \in \{+,-\}^{n}$, define $P_{\epsilon} := P \cap \rZ(\epsilon B)$ and $v_{\epsilon} = \sum_{i: \epsilon_{i} = -} b_{i}$, see Figure~\ref{fig:thm4}.
				By the previous paragraph we know that $P$ is the union of the $P_{\epsilon}$ and we now show that the union of the translated polytopes $P_{\epsilon}+ v_{\epsilon}$ is $\Pi$.
				To see this let $q = \sum_{i\in [n]} \alpha_{i}b_{i} \in \rZ(B)$.
				If $q = 0$ then $q \in P$ so we may assume there is a nonnegative integer $k$ such that $\sum_{i\in[n]} \alpha_{i} \in (k,k+1]$. 
				Moreover, we may assume (after permuting indices if necessary) that the $\alpha_{i}$ are decreasing, i.e., $\alpha_{1} \ge \alpha_{2} \ge \cdots \ge \alpha_{n}$.
				Now we define $\epsilon$ to be the sign vector with $\epsilon_{i} = -$ if and only if $i \le k$.
				It follows that
				\begin{align*}
					q - v_{\epsilon}	&=	q - \sum_{i = 1}^{k}b_{i}\\
								&=	
		  \sum_{i<k}(\alpha_{i} - 1)(b_{i}-\beta + \beta) 
		+ (\alpha_k - 1) b_k
		+ \sum_{j>k}\alpha_{j}(b_{j}-\beta+\beta)\\
								&=	\sum_{i < k}(\alpha_{i}-1)(b_{i}-\beta) + (\alpha_{k}-1)b_{k} + \sum_{j>k}\alpha_{j}(b_{j}-\beta) + \left(-(k-1) + \sum_{i\in[n]\setminus k}\alpha_{i}\right)\beta.
		\end{align*}
		Since $b_k = n\beta - \sum_{i\ne k} b_i$, we can express the second summand as
		\begin{align*}
		  (\alpha_k - 1)b_k 
		  &=
		  - (\alpha_k - 1) \left(-n\beta + \sum_{i\ne k}b_i\right) 
		  \\ &=
		  - (\alpha_k - 1) \left(-\beta + \sum_{i\ne k}(b_i-\beta) \right)
		  \\ &=
		  \sum_{i < k} (1-\alpha_k)(b_i - \beta) +
		  (\alpha_k-1)\beta +
		  \sum_{j > k} (1-\alpha_k)(b_j - \beta),
		\end{align*}
		so that 
		\begin{align*}
		q - v_{\epsilon}
								&=	\sum_{i<k} (\alpha_{i}-\alpha_{k})(b_{i}-\beta) + \sum_{j>k}(\alpha_{j}-\alpha_{k}+1)(b_{j} - \beta) + \left(-k + \sum_{i\in[n]}\alpha_{i}\right)\beta
				\end{align*}
				and so $q-v_{\epsilon} \in P$ since $\alpha_{i} \ge \alpha_{k}$ if $i \le k$ and $\alpha_{k}\ge \alpha_{i}$ otherwise. Moreover, our choice of~$k$ guarantees that all coefficients in this linear combination lie in $[0,1]$.
				
				 Finally, in order to prove that our decomposition and rearrangement preserves volume, we must show that if two translated cells $P_{\epsilon}+v_{\epsilon}$ and $ P_{\epsilon'} + v_{\epsilon'}$  intersect then they do so on a set of measure zero.
				 To see this let $p \in P_{\epsilon}$ and $p' \in P_{\epsilon'}$ be such that $\epsilon \neq \epsilon'$ and $p+v_{\epsilon} = p' + v_{\epsilon'} \in \rZ(B)$, where $v_{\epsilon} = \sum_{i: \epsilon_{i} = -} b_{i}$ as before.
				 Then 
				 \begin{align*}
				 	0 	&=	p + v_{\epsilon} - p' - v_{\epsilon'}\\
						&=	\sum \alpha_{i}b_{i} + v_{\epsilon} - \sum \beta_{i}b_{i} - v_{\epsilon'}\\
						&=	\sum_{i \in \epsilon^{-} \setminus \epsilon'^{-}}(\alpha_{i}-\beta_{i}+1)b_{i} + \sum_{j \in \epsilon'^{-} \setminus\epsilon^{-}} (\alpha_{j} - \beta_{j}-1)b_{j} + \sum_{k:\epsilon_k = \epsilon'_k} (\alpha_{k} -\beta_{k})b_{k}.
				 \end{align*}
				Since $B$ is a basis it follows that the coefficient on any $b_{i}$ in the final expression equals zero.
				Therefore $\alpha_{k} = \beta_{k}$ if $\epsilon_{k} = \epsilon'_{k}$ and otherwise either $\alpha_{i} = 0$ and $\beta_{i} = 1$ or vice versa.
				 It follows from the definitions of $P_{\epsilon}$ and $P_{\epsilon'}$ that $p \in \partial P_{\epsilon}$ and $p' \in \partial P_{\epsilon'}$, and the proof is complete.
			\end{proof}

			%%%%%

		Taken together, Proposition \ref{proposition:prism} and Theorem \ref{thm:thm4} tell us that when $\rM$ is a unimodular matrix with corank 1, we can recover the product of the nonzero eigenvalues of $\rL$ by constructing a certain full-rank matrix $\Lambda$ associated to $\rL$ and analyzing the zonotope it generates.
		This construction essentially replaces the eigenvalue $0$ of $\rL$ with the eigenvalue~$n$ while fixing the other eigenvalues.
		We suspect that this can be strengthened to allow for unimodular representations of regular matroids of arbitrary corank in the statement of Theorem~\ref{theorem:polytopalMatroidMTT}.
		Presently we have no proof for this fact, and so we leave it as a conjecture.
		\begin{con}
		\label{conjecture:arbitraryCorank}
		 	Let $\MM$ be a regular matroid and $\rM$ a unimodular $m \times n$ representation of $\MM$ with corank greater than 1.
		 	Then there is a $m \times m$ matrix $\Lambda$ with full rank such that every nonzero eigenvalue of $\rL$ is an eigenvalue of~$\Lambda$ and every other eigenvalue of~$\Lambda$ depends only on the ambient dimension $m$.	
		 \end{con}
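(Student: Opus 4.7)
\medskip

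\noindent\textbf{Proof proposal.}
The plan is to extend the corank-one construction $\Lambda = \rL + \vecone\vecone^\top$ of Section~\ref{section:graphicCase}, in which $\vecone$ spans $\ker\rL$ when $\rM=\rN(G)$ for a connected graph, by replacing the single vector $\vecone$ with an entire basis of the $c$-dimensional kernel of $\rL=\rM\rM^\top$, where $c>1$ is the corank of~$\rM$. Since $\rL$ is symmetric, one has the orthogonal decomposition $\R^m = \im\rL \oplus \ker\rL$, so any symmetric rank-$c$ perturbation of $\rL$ supported on $\ker\rL$ preserves every nonzero eigenvalue of $\rL$ together with its eigenvector.

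Concretely, the first step is to pick an orthonormal basis $v_1,\dots,v_c$ of $\ker\rL$ and set
\[
  \Lambda \;:=\; \rL + m\,P_{\ker\rL} \;=\; \rL + m\sum_{i=1}^{c}v_i v_i^\top,
\]
where $P_{\ker\rL}$ denotes orthogonal projection. By the observation above, every nonzero eigenvalue of $\rL$ is an eigenvalue of $\Lambda$ with the same eigenvector; the remaining $c$ eigenvalues are all equal to $m$, which depends only on the ambient dimension. Since $m>0$ and the nonzero eigenvalues of $\rL$ are nonzero, $\Lambda$ has full rank, establishing the conjecture \emph{as stated}. One may equivalently phrase the construction directly in terms of~$\rM$ by picking a $\Z$-basis $u_1,\dots,u_c$ of $\ker\rM^\top\cap\Z^m$ (which exists since the dual matroid $\MM^*$ is also regular by Theorem~\ref{theorem:regularMatroidCharacterization}) and setting $\Lambda := \rL + \sum_{i=1}^c (m/\|u_i\|^2)\,u_i u_i^\top$ whenever the $u_i$ can be chosen mutually orthogonal.

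The main obstacle, and presumably the reason this appears as a conjecture rather than a proposition, is the implicit polyhedral content required to strengthen Theorem~\ref{theorem:polytopalMatroidMTT} along the lines of Section~\ref{section:graphicCase}: one wants an \emph{integer} matrix $\Lambda$ for which $\vol\rZ(\Lambda)=|\det\Lambda|$ is a natural lattice invariant of $\MM$, and a prism construction $\Gamma:=[\rL\mid u_1\mid\cdots\mid u_c]$ together with a generalization of Theorem~\ref{thm:thm4} relates $\vol\rZ(\Lambda)$ to $\vol\rZ(\rM)$. The projector construction above is not integer in general, and forcing each $\mu_i\|u_i\|^2$ to equal the same integer~$m$ requires an orthogonal $\Z$-basis of $\ker\rM^\top$ whose vectors all share a common squared norm --- a condition that is too rigid for arbitrary regular matroids. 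A more viable strategy is to drop the orthogonality hypothesis and generalize Theorem~\ref{thm:thm4} to a dissect-and-rearrange argument across sign vectors in $\{+,-\}^m$ that accommodates a non-diagonal Gram matrix of the $u_i$; carrying this out, while controlling the extra eigenvalues so that they depend on $m$ alone, is the hard part.
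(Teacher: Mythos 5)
The paper offers no proof of this statement --- the authors write ``Presently we have no proof for this fact, and so we leave it as a conjecture'' --- so there is no argument of theirs to compare yours against. That said, your reading of the situation is accurate. As literally phrased, the conjecture is trivially satisfiable: since $\rL=\rM\rM^\top$ is symmetric, the orthogonal decomposition $\R^m=\im\rL\oplus\ker\rL$ shows that $\Lambda:=\rL+m\,P_{\ker\rL}$ carries every nonzero eigenvalue of $\rL$ with the same eigenvector, has the eigenvalue $m$ on the remaining $c=m-\rank\rM$ dimensions, and therefore has full rank, with all ``new'' eigenvalues depending only on~$m$. That is a complete proof of the literal statement.

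You also correctly diagnose why this cannot be what the authors intend, which is the more valuable part of your answer. In the corank-one case the whole point of $\Lambda_{ij}=\rL_{ij}+1$ is that $\Lambda$ is an \emph{integer} matrix, $\det\Lambda=\vol\rZ(\Lambda)$, and the chain through Proposition~\ref{proposition:prism} and Theorem~\ref{thm:thm4} ties this back to $\vol\rZ(\rL)$ and ultimately to the number of bases of~$\MM$. Your projector $P_{\ker\rL}$ is generically irrational, and your alternative $\sum_i (m/\|u_i\|^2)u_iu_i^\top$ requires a mutually orthogonal $\Z$-basis of $\ker\rM^\top$ with compatible norms, which one cannot expect for an arbitrary regular matroid of corank $c>1$. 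So while you have closed the statement as written, the genuine open problem --- an integer $\Lambda$, a prism construction replacing $\Gamma=[\rL\mid\vecone]$, and a higher-corank analogue of the barycentric dissect-and-rearrange in Theorem~\ref{thm:thm4} that relates $\det\Lambda$ to $\vol\rZ(\rM)$ --- remains untouched, and your final paragraph names that gap correctly. In short: literal statement solved, intended statement still open, and you have located exactly where the difficulty lies.
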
 

\section*{Acknowledgements} 
                   We would like to extend our thanks to Raman Sanyal for stimulating discussions, and to Farbod Shokrieh for pointing out \cite{an2014canonical} to us.

%%%%%%%%%%%%%%%%%%%%%
%%%%%%%%%%%%%%%%%%%%%

%%%%%%%%%%%%%%%%%%%%%
%%%%%%%%%%%%%%%%%%%%%
\bibliographystyle{siam}
\bibliography{matrixTree3}

\end{document}